\newtheorem {proposition}{Proposition}
\newtheorem {remark}{Remark}
\def \N {{\mathbb{N}}}
\def \R {{\mathbb {R}}}
\newcommand{\Ffun}{\mathcal{F}}
\newcommand{\Kfun}{\mathcal{K}}
\newcommand{\Gfun}{\mathcal{G}}
\newcommand{\MAP}{{\rm MAP}}
 \newcommand{\bphi}{{\boldsymbol \varphi}}
 \newcommand{\bA}{{\bf A}}
 \newcommand{\Ao}[1]{{\bf A}_{\al_{#1}}}
\newcommand{\covl}{c_{\Phi}}
\newcommand{\Covl}{C_{\Phi}}
\newcommand{\bCovl}{{\bf C}_{\Psi}}
\newcommand{\bWem}{{\bf W}}
\newcommand{\bW}[1]{{\bf W}(#1)}
\newcommand{\bWeps}[1]{{\bf W}_{\epsilon}(#1)}
\newcommand{\bx}{{x}}
\newcommand{\bP}{{\bf P}}
\newcommand{\al}{ \alpha}
\def\ag{\al_g}
\def\norm#1{\hspace{0.2ex} \|#1\| \hspace{0.2ex}}
\def\oh{\frac{1}{2}}
\def\eps{\varepsilon}
\newcommand{\bd}{{\bf d}}
\newcommand{\br}{{r}}
\newcommand{\Om}{\Omega}
\newcommand{\bPsi}{\boldsymbol{\Psi}}
\def\cetap{C_\eta}
\def\data#1{\varphi_{\al_{#1}}}
\def \bPhi{{\boldsymbol \Phi}}
\def\vcn{{\boldsymbol \rho}}
\newcommand{\myPsi}{\Theta}
\newcommand{\bmyPsi}{\boldsymbol{\Theta}}
\newcommand{\ind}{{m}}
\def\lip{\lambda}
\def \dis{\displaystyle}
\def \diag{\mbox{diag}}
\DeclareMathOperator*{\argmin}{arg\,min}
\renewcommand{\bigotimes}{\mathop{\raisebox{-.5ex}{\hbox{\huge{$\times$}}}}}
\title{Towards analytical model optimization in atmospheric tomography}
\author{Tapio Helin, Stefan Kindermann, Daniela Saxenhuber}
\email{Tapio.Helin@helsinki.fi}
\email{stefan.kindermann@indmath.uni-linz.ac.at}
\email{saxenhuber@indmath.uni-linz.ac.at}
\begin{document}

\maketitle


\begin{abstract}
Modern ground-based telescopes rely on a technology called adaptive optics (AO) in order to compensate for the loss of
image quality caused by atmospheric turbulence.
Next-generation AO systems designed for a wide field of view 
require a stable and high-resolution reconstruction of the refractive index fluctuations in the atmosphere. 
By introducing a novel Bayesian method,  we address the problem of estimating an  atmospheric turbulence
strength profile and reconstructing 
the refractive index fluctuations  simultaneously, where we only use 
wavefront measurements of incoming light from guide stars.
Most importantly, we demonstrate how this method can be used for model optimization as well.
We propose two different algorithms for solving the maximum a posteriori estimate:
the first approach is based on alternating minimization and has the advantage of integrability 
into existing atmospheric tomography methods. 
In the second approach, we formulate a convex non-differentiable optimization problem, 
which is solved by an iterative thresholding method. 
This approach clearly illustrates the underlying sparsity-enforcing mechanism for the strength profile. 
By introducing a tuning/regularization parameter, 
an automated model reduction of the layer structure of the atmosphere is achieved. 
Using numerical simulations, we demonstrate the performance of our 
method in practice.
%
%
%
%
%
%
%
%
\end{abstract}

\section{Introduction}
In the next generation of telescopes, called the extremely large telescopes (ELT), 
atmospheric turbulence is the major limiting factor for the angular resolution. 
Adaptive optics (AO) systems are designed to improve the imaging quality by 
providing real-time correction for the unwanted distortions generated by the 
atmosphere. Next generation AO systems are required to 
produce a good correction in a large field of view. To achieve this, they use
the measurements of incoming wavefronts from reference light sources 
(guide stars) for the reconstruction of the turbulence (refractive index 
fluctuations) above the telescope. At the core of this challenge is a severely 
ill-posed mathematical problem called \emph{atmospheric tomography}. Based on 
the turbulence profile, the shape of the deformable mirrors (DM) has to be determined 
such that the image of the scientific objects is corrected after reflection on 
the deformable mirrors. 
%
%
%

The ill-posedness of atmospheric tomography is clear, when considering some problem parameters in 
the European Extremely Large Telescope (E-ELT). In the E-ELT, 9 guide stars and associated wavefront sensors (WFSs) 
sample a field of view of at most 10 arcmin in diameter (e.g., Multi-Object 
Adaptive optics \cite{Rousset10}), while the relevant turbulent atmosphere at 
the site reaches 
the altitude of roughly 20 km \cite{Vernin11}. It goes without saying that the 
achievable vertical resolution is low and the  model assumptions  play a 
crucial role in the regularization of the tomography problem.

Current reconstruction algorithms are based on 
the so-called layered 
model \cite{RoWe96} where the atmosphere is divided into a finite number of 
slabs or layers  (see Fig.~\ref{fig:atmtomo}). 
The key 
assumptions are that the turbulence subregions have approximately homogeneous 
statistics and that each layer is assumed to be statistically independent. 
A crucial component of all layered models is the 
\emph{refractive index structure parameter} (also known  as  $C_n^2$-profile or strength profile
in  telescope imaging literature). 
The strength profile describes the distribution of the (turbulence) energy 
in the vertical coordinate.  It  fluctuates in time and needs to be measured frequently by external 
instruments. 

However, with a change in the strength profile,
the discretization (e.g., the number and position of the layers) of the atmosphere in the tomographic reconstructor may 
become inefficient. As the 
real-time nature of atmospheric tomography dictates that applicable computational resources are very limited, 
an optimized model is vital  in achieving good imaging quality.
This remains an issue  as well in the next-generation implementations due to 
the rapid increase of system sizes.
To the knowledge of the authors, no analytical method exists to optimize the discretization 
with respect to the strength profile.

While the literature on reconstruction methods is well-developed, analytical 
model optimization has gained less attention: how sensitive are the algorithms 
to fluctuations in the strength profile? 
 How to choose an optimal model for the atmosphere?
In the current generation of telescopes, the turbulence strength profile is 
measured using independent instruments, e.g., MASS and 
SCIDAR~\cite{Masciadri14}. The impact of the 
turbulence strength on the tomographic reconstruction has been studied, e.g., in 
\cite{FuCo10,CoFu12,Gendron14,Costille12}. Novel numerical methods have been 
recently proposed for estimating the profile based on the wavefront sensor 
measurements \cite{gilles2010real, cortes2012}. Such a method would carry the 
advantage of integrating the strength profile modelling more closely to the 
reconstruction algorithm. Nonetheless, the question of choosing an optimal 
computational model seems open.

In this work we focus on  model optimization in atmospheric tomography. We 
introduce a novel reconstruction method that simultaneously produces an estimate 
of the turbulent layers as well as the turbulence profile. We show that while 
such an algorithm requires a comparably large computational effort, it can be qualitatively 
superior in situations where there is uncertainty in the strength profile. What 
is more, we present a modification of this method which enforces sparsity on 
the profile-solution. In other words, the algorithm produces an estimate of the 
atmosphere which is optimized in both the data fidelity as well as the number 
and the altitudes of the layers. The ability to do so is the crux of model 
optimization. Given the computational resources (roughly how many layers can be 
modeled), our method is able to optimize the model (altitudes of the layers) 
based on the data-stream.

For the optimization, we propose two alternative procedures, an alternating minimization-type algorithm, 
which switches between a layer-reconstruction step and a layer-strength identification step 
and an iterative shrinkage-type method, which promotes a sparse atmosphere model and 
hence is tailored for  model reduction. Although both methods approximate the solution to the 
same optimization problem, they have different scopes, as we will explain below. 

A second major contribution in this paper is to include an additional clustering structure 
of the layers by grouping them into independent clusters.  Such a step is important in 
view of an intended  model reduction because otherwise the predominant ground layer 
overshadows all other layers.

The results in this paper are demonstrated and confirmed by simulations in an adaptive 
optics simulation environment called the MOST \cite{Au15}. Let us mention that for 
the simulations, we have chosen rather idealistic imaging conditions. 
Including more practically relevant  effects such as  spot elongation and  tip-tilt 
indetermination \cite{RoWe96} is important if one aims to measure the real-life performance of a 
reconstruction algorithm. In this work we instead concentrate on the effects taking place in the 
reconstructed strength profile and how energy is divided to different altitudes. Our objective is to 
build the  basis for such an optimization algorithm.


This paper is organized as follows. In Section \ref{sec:problem_setting}, we describe the problem 
setting and review a model for atmospheric turbulence. Section \ref{sec:bayes} is concerned with a 
Bayesian approach to the problem of atmospheric tomography. In Section \ref{sec:algo}, we state several equivalent 
optimization problems
and propose two different algorithms, an alternating minimization procedure and an iterative shrinkage method. 
Numerical results are presented in Section \ref{sec:numerics}.

\section{Problem setting} \label{sec:problem_setting}

\subsection{Light propagation in the atmosphere}

The wind in the atmosphere causes an irregular mixing of warm and cold air. This effect is
called the atmospheric turbulence. The fluctuations of the temperature are essentially
proportional to the refractive index fluctuations \cite{Ro99}, and hence,  turbulence affects the propagation of light. 

In atmospheric tomography one aims to reconstruct the turbulence profile given wavefront sensor (WFS) measurements. Further, 
in an AO system, these data are used to adapt the shape of one or more deformable mirrors (DM).
Below we assume that the tomography problem (WFS data to atmosphere reconstruction) can be divided into two
reliable steps where the incoming wavefronts are resolved in  an intermediate stage. Together with the DM shape optimization step,
such an approach is sometimes referred to as the 3-step approach \cite{RaRo12}. 
The 3-step approach typically requires good imaging conditions in order to be successful.
 
\begin{figure}[!ht]
\centering
\includegraphics[scale=0.25]{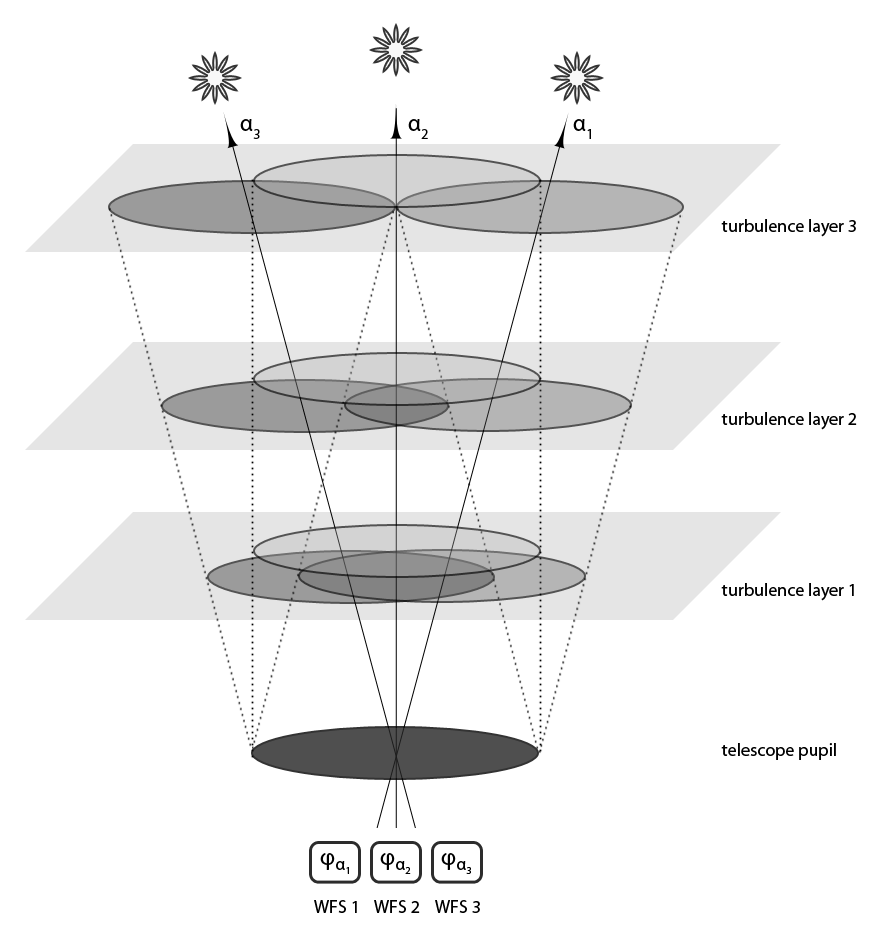}
\caption{Illustration of the atmospheric tomography problem with three guide 
star directions, three WFS, and three atmospheric layers.}
\label{fig:atmtomo}
\end{figure}

In the following, we are mainly interested in the second step, i.e., estimating the atmosphere from the incoming wavefronts (see Fig,~\ref{fig:atmtomo}). 
Let us therefore explain next the physical model that connects the
turbulence layers located at  
different  altitudes $h_l$, $l=1,\ldots L$, to the incoming wavefronts $\varphi_{\ag}$, $g=1,...,G$, in each guide star direction.

We define the forward operator $\bA$ as the mapping from the atmosphere
consisting of $L$ layers to the $G$ incoming wavefronts
\begin{align}
\label{eq:forward_operator}
\begin{split}
\bA :\bigotimes_{l=1}^L L^2(\Om_l) \, \to \, \bigotimes_{g=1}^GL^2(\Om_D)\,, \\
\bA =: [\Ao{1},\dots , \Ao{G}]\,,
\end{split}
\end{align}
with the operators in guide star directions $\alpha_g:= (\alpha_g^{(1)}, \alpha_g^{(2)}),$
corresponding  to the 3D directional vector $(\alpha_g^{(1)}, \alpha_g^{(2)},1),$  $g= 1,\ldots G.$
Above, $\Om_D$ and $\Om_l$   stand for the layer domains and the telescope aperture, respectively;
see \eqref{eq:omd}, \eqref{eq:oml}.
A next-generation adaptive optics system typically utilizes a mix of  \textit{natural guide stars (NGS)} and
\textit{laser guide stars (LGS)}. For this work, the crucial difference is in the geometry of light propagation explained below.

Under the geometric optics approximation and appropriate assumptions on the atmosphere, the distortions in the phase of
light are proportional to the integral over the refractive index fluctuations along the path of light.
In the layered model of atmosphere, the precise formula for the forward operator is
\begin{align}
\label{eq:aag}
\begin{split}
\Ao{g}:&\bigotimes_{l=1}^L  L^2(\Om_l)  \to L^2(\Om_D) \,, \\
\Ao{g} \bPhi &:=\sum_{l=1}^L  {\Phi^{(l)}(c_l\br+h_{l}\al_{g})} =\data{g}(\br)~,\,  \quad \br \in \Om_D\, , g = 1,\ldots G,
\end{split}
\end{align}
where $\Phi^{(l)}$ represents the fluctuations of the refractive index (also referred to as ``turbulence'' in the following) at 
layer $l$. 
Here $\ag$ corresponds to the 3D directional vector $(\alpha_g^{(1)}, \alpha_g^{(2)},1)$ and 
$c_l$ is a factor needed in the case of  laser guide stars. 
In fact, the conical propagation of light in LGSs requires a scaling factor 
\begin{align}
c_l:= \begin{cases} 1\,, & \mbox{for NGS}\\ 1- \frac{h_l}{h_{LGS}}, & \mbox{for LGS and }\, 
h_{LGS}. \end{cases}
\end{align}
Above, $h_{LGS}$ denotes the altitude where the laser beam scatters, i.e., the altitude where the artificial light source is observed.
We assume that the telescope aperture is annular with \mbox{$D>d\geq 0$} and
\begin{align}\label{eq:omd}
\Om_D := \{ \br = (x,y) \in \R^2\,:\, d \leq \norm{r} \leq D\}.
\end{align}
In consequence, the visible areas of the atmosphere at layer $l$ can be described as
\begin{align}\label{eq:oml}
\Om_l := \bigcup_{g=1}^G \Om_D(h_l \ag)\,, \quad  \mbox{ with } \, \quad \Om_D(h_l \ag) :=
\left\{ \br \in \R^2 \, : \, \frac{\br  -h_l \ag}{c_l} \in \Om_D\right\}\,.
\end{align}
Note that it is also possible to consider the tomography operator \eqref{eq:aag} acting 
between the spaces $\dis\bigotimes_{l=1}^L  H^1(\Om_l)$ to $H^1(\Om_D)$; see \cite{EsPeRa13}.
Let us explain our notation:  we write  functions representing turbulence (and only those) 
at some layer $l$ with an upper bracketed index, $\Phi^{(l)}.$  
Moreover, to distinguish between objects on layers and the collection of these 
objects into a vector, we write the latter as well as all operators acting on these by bold symbols.  
Thus, the total atmospheric turbulence
as the vector having the turbulence at the different layers as entries  is  expressed as $\bPhi = (\Phi^{(1)},\ldots, \Phi^{(L)})$. 

\subsection{Atmospheric turbulence}

Statistical models for turbulence are frequently utilized in the AO literature when postulating 
the tomography step as a Bayesian inference problem.
The classical works by Kolmogorov \cite{Ko41} suggest that the turbulence statistics 
can be modeled by a homogeneous and isotropic Gaussian random field. The key assumption is 
that the power spectral density satisfies Kolmogorov's power law, i.e.,
it is proportional to $|\kappa|^{-11/3}$ for $\ell_0 \leq \kappa \leq L_0$ where $\kappa$ is the spatial 
frequency of the turbulence field and  the bounds $\ell_0$ and $L_0$ (inner and outer scale, respectively) 
define the so-called inertial range.
Deviations from Kolmogorov turbulence have been debated, but currently Kolmogorov models or 
variants are commonly used in the reconstruction algorithm literature related to atmospheric tomography.

In this work we assume a so-called von Karman statistics \cite{vonKarman} that modifies the Kolmogorov model in order
to avoid the 
singularity at $\kappa = 0$.
Under the von Karman model, the cumulative turbulence integrated over the atmosphere has a covariance function
\begin{equation}
\label{eq:cphi}
\covl(\bx_1,\bx_2) = \covl(\Delta \bx) =   \left(\tfrac{L_0}{r_0}\right)^\frac{5}{3}\tfrac{c}{2}
\left(\tfrac{2\pi\Delta x}{L_0}\right)^\frac{5}{6} K_{5/6} \left(\tfrac{2\pi \Delta \bx}{L_0}\right)\,,
\end{equation}
where $\bx_1,\bx_2\in \R^2,$ $\Delta x = |x_1-x_2|$, the constant 
$c = \frac{2^{1/6}\Gamma(11/6)}{\pi^{8/3}}\left(\frac{24}{5} \Gamma(\frac{6}{5})\right)^{5/6}$, 
$K(.)$ is the modified Bessel function of the second type \cite{AbrSteg}, 
$\Gamma$ is the gamma function, and $r_0$ is the Fried parameter~\cite{BeCeMa08}.
 Notice carefully that since $\Phi$ is a stationary random field, 
 the covariance function depends only on the separation of the points $\bx_1$ and $\bx_2$.

Now let $\Covl$ denote the covariance operator formally induced by $\covl$ in equation \eqref{eq:cphi}. We remark 
that the realizations from the probability distribution of $\Phi$ do not decay at infinity and thus fail to belong to 
typical function spaces, e.g., $L^2(\R^2)$. The definition of $\Covl$ can be made precise as a pseudodifferential
operator of order $-11/3$ in weighted Sobolev spaces or using the formalism of generalized random variables \cite{Gelfand}. 
For a detailed expression of $\Covl$, see \cite{HeYu13}.
In the following we neglect a more detailed analysis and work with the discretized version of $\Covl$.

In the layered model of the atmosphere, the cumulative turbulence  is decomposed into  separate layers by assuming that 
the a priori distribution of $\bPhi =(\Phi^{(1)},\ldots,\Phi^{(L)})$ has zero-mean Gaussian statistics with covariance operator ${\rm diag}(\rho_1 \Covl, ..., \rho_L \Covl)$,
where the relative turbulence strength vector $\vcn:= (\rho_1,\ldots \rho_L )$ is normalized, i.e.,
\begin{equation}\label{normalization}
\sum_{l=1}^L \rho_l = 1.
\end{equation}
Recall that in adaptive optics the cumulative turbulence is typically given from independent measurements and expressed in equation \eqref{eq:cphi}. 
Therefore, the normalization is required for $\vcn$.

\section{The Bayesian approach} \label{sec:bayes}
In atmospheric tomography one considers the problem
\begin{equation}
\label{eq:tomography_problem}
\bA \bPhi = \bphi
\end{equation}
where $\bPhi = (\Phi^{(l)})_{l=1}^L$ and $\bphi = (\varphi_{\ag})_{g=1}^G$ represent the unknown turbulence layers and incoming wavefronts, respectively, and $\bA$ is given by equation \eqref{eq:forward_operator}. 
Let us shortly outline how the problem \eqref{eq:tomography_problem} can be interpreted using Bayesian inference.

To avoid technicalities, we assume that the model \eqref{eq:tomography_problem} is discretized by orthogonal projections
\begin{equation*}
	Q_m : \bigotimes_{l=1}^L L^2(\Om_l) \to X_m \subset \bigotimes_{l=1}^L L^2(\Om_l)
	\quad {\rm and} \quad
	R_n : \bigotimes_{g=1}^GL^2(\Om_D) \to Y_n \subset \bigotimes_{g=1}^GL^2(\Om_D),
\end{equation*}
where the parameters $m,n\in \N$ indicate the dimension of each projection, respectively.
We set $\bphi_m = Q_m\bphi$ and $\bPhi_n = R_n \bPhi$ and obtain a linear system
$ \bA_{mn} \bPhi_n=\bphi_m$ with $\bA_{mn} = Q_n \bA R_n$.
For this presentation, the details of the discretization basis are not relevant  and are omitted.

For the rest of the paper, let us abuse the notation by writing $\bA$, $\bPhi$ and $\bphi$ instead of $\bA_{mn}, \bPhi_n$ and $\bphi_m$ and identify them 
with matrices and vectors   in 
Euclidean spaces equipped with the Euclidean norm. Hence, in the following, the  appearing covariance operators
($\cetap$, $\Covl$ below)  can  be represented 
by non-singular matrices. 

Our starting point for the Bayesian setup is based on the splitting of the covariance for $\bPhi$ into  
a relative turbulence strength part and a standardized covariance as above. 
Hence, we  assume that the prior model for $\bPhi$ can be decomposed as
\begin{equation}
	\label{eq:reparametrization_of_phi}
	\bPhi = \bW{\vcn} \bPsi,
\end{equation}
with two mutually independent random components 
\begin{equation}\label{eq: defWpsi}
\bW{\vcn} = {\rm diag}(\sqrt{\rho_j})_{j=1}^L \in \R^{L\times L} \quad \text{ and } \quad 
\bPsi = (\Psi^{(j)})_{j=1}^L  \, . 
\end{equation}

The vector $\vcn$ is assumed to be distributed according to
\begin{equation}\label{eq:hyper1} \pi_{prior}({\vcn}) \sim 
\delta_{\Gamma}(\vcn),
\end{equation}
where $\delta_{\Gamma}$ denotes the Dirac delta on the surface
$\Gamma = \{\vcn \; | \; \sum_{l=1}^L \rho_l = 1, \rho_l\geq 0\} \subset \R^L$. In other words, $\vcn$ is 
assumed to be equally distributed within an $L$-dimensional simplex, i.e., we only include the information 
that it is nonnegative and normalized by \eqref{normalization}.
Furthermore, we assume $\Psi \sim {\mathcal N}(0, \frac 1 \alpha \bCovl)$, where 
$\bCovl = {\rm diag}(\Covl, ..., \Covl)$ and $\alpha>0$ is the model tuning parameter. We discuss the role of $\alpha$ in more detail below.

In the following, we choose to make statistical inference on the pair $(\vcn, \bPsi)$ instead of $\bPhi$. Since the two were assumed 
to be independent, we have
\begin{equation*}
	\pi_{\rm prior}(\vcn, \bPsi) = \pi_{\rm prior}(\vcn) \pi_{\rm prior}(\bPsi).
\end{equation*}
Consequently, the posterior takes the following form
\begin{equation*}
	\pi_{\rm post}(\vcn, \bPsi \; |\; \bphi) =
	c \pi_{\rm prior}(\vcn) \pi_{\rm prior}(\bPsi) \pi_{\rm noise}(\bphi-\bA \bW{\vcn} \bPsi),
\end{equation*}
where the constant $c$ depends on the measurement $\bphi$.
Assuming that the measurement noise is Gaussian to a good approximation with  zero mean and covariance matrix $\cetap,$ 
we conclude that the MAP estimate is obtained from a constrained optimization problem 
\begin{equation}
	\label{eq:global_problem}
	\begin{split}
(\bPsi_{\MAP},\vcn_{\MAP}) & \in   \argmin_{\substack{\bPsi \in Y_n \\ \vcn\in \Gamma}} \Kfun(\bPsi,\vcn) \\
\Kfun(\bPsi,\vcn)&:= \norm{\cetap^{-\oh}({\bf A} \bW{\vcn} \bPsi - \bphi)}^2_2 + 
 \alpha \norm{\bCovl^{-1/2} \bPsi}^2_2\, .
 \end{split}
\end{equation}

\begin{remark}
We point out that, in general, a MAP estimate is not invariant under  reparametrizations (see e.g. \cite{druilhet07}). 
More precisely, consider the mapping $f : (\bPhi,\vcn) \mapsto (\bPsi,\vcn)$. In our case,
the image $f(\bPhi_{MAP},\vcn_{MAP})$ does not coincide with $(\bPsi_{MAP},\vcn_{MAP})$ since the dependency of $\bPhi$ and $\vcn$ will introduce additional non-quadratic terms in the minimization problem \eqref{eq:global_problem}. Our numerical simulations suggest that the MAP estimator in \eqref{eq:global_problem} represents the posterior well and has good approximation properties in practical settings. In addition, the MAP estimate becomes discretization invariant with respect to $\bPsi$ \cite{HB15} and connects directly to an interesting class of optimization problems related to sparse structures. It remains (an important) part of future work to study the posterior from perspective of conditional mean estimates and confidence intervals. These practically motivated objectives are out of the scope of this treatise.
\end{remark}

Although not obvious at first sight, the estimator in \eqref{eq:global_problem} has 
a built-in sparsity-enforcing mechanism. In practice, when the dimension of the data is radically 
lower than the unknown, such a method yields an estimate $\vcn_{\MAP}$ which has zero turbulence
strength at several altitudes. This contradicts with reality in the sense that the turbulence profile is 
considered smooth with respect to the altitude. However, we will demonstrate below that decreasing the dimension
(fewer layers are used in the model) reduces or even removes the sparsity effect. 

For a higher number of layers,
our simulations suggest that the solution is optimal under a constraint that a fixed number of turbulence
strength components $\rho_l$ are non-zero. Let us make this claim more precise: suppose we model $L$ layers. 
Then the solution $(\Psi_{MAP},\vcn_{\MAP})$ in \eqref{eq:global_problem} has $L_0$ 
non-zero components $\rho_l$ for some $L_0\leq L$. Moreover, $(\Psi_{MAP},\vcn_{\MAP})$ is optimal (in the sense of minimizing the  functional $\Kfun$) among all reconstructions that have
only $L_0$ non-zero layers (out of the chosen set of $L$ layers). What is more, we can control 
the number $L_0$ by adjusting the regularization term in \eqref{eq:global_problem} by tuning~$\alpha$. 

Below we find that in such an approach the layers at lower altitudes become heavily preferred. This is due to both geometrical factors as well as the fact that most of the turbulence is located close to the ground. For a more detailed analysis, see Section \ref{sec:numerics}. 
In order to include layers at higher altitude to the solution we introduce additional information. We assume that we know the 
cumulative turbulence strength on some altitude intervals in the atmosphere. In a nutshell, the set of layers is divided into clusters of layers and 
we know the cumulative strength over 
each cluster.

Let us discuss the clustering in more detail. We assume that  $N< L$ clusters, ${\mathcal B}_1, \dots ,{\mathcal B}_N$, are given which constitute a partition 
of the layers, i.e.,
\begin{equation}
{\mathcal B}_i \subset \{1,\ldots, L\}, \quad 
\bigcup_{i=1}^N \mathcal B_i = \{1,\ldots, L\}, \quad {\mathcal B}_i \cap \mathcal B_j = \emptyset  \text{ for  } i\neq j,
\end{equation}
and on each cluster we fix a total turbulent energy $d_i$ as 
\begin{equation} 
\label{eq:d_i}
d_i:= \sum_{l \in {\mathcal B}_i} \rho_l, \qquad \bd := (d_1,\ldots,d_N),  \qquad \sum_{i=1}^N d_i = 1\,, 
\end{equation}
where $\bd$ is given. 
Further, we assume that the prior is uniformly distributed on the simpleces formed by the 
clusters ${\mathcal B}_j$ in the following way
\begin{align}\label{eq:hyper2} \pi_{\rm prior}({\vcn}) \sim 
\delta_{\Gamma_N}, 
\end{align}
where 
$$\Gamma_N(\bd) = \left\{\vcn \in \R^L \Big| \; \sum_{l \in {\mathcal B}_i} \rho_l = d_i \textrm{ for all } i=1,...,N,    
\rho_l \geq 0, l = 1,\ldots L \right\}. $$
Including this additional information we propose now a 
modified method with an a priori fixed vector $\bd$ such that  the MAP estimate is given 
analogous to \eqref{eq:global_problem}
by 
\begin{equation}
	\label{eq:global_problem2}
(\bPsi_{\MAP},\vcn_{\MAP}) \in  
\argmin_{\substack{\bPsi \in Y_n \\ \vcn\in \Gamma_N(\bd)}} \Kfun(\bPsi,\vcn), 
%
\end{equation}
with $\Kfun(\bPsi,\vcn)$ as in \eqref{eq:global_problem}.
Obviously, the modified method reduces to the original 
by setting $N = 1$.  

Let us now return to the role of parameter the $\alpha$. 
As mentioned above, the constraint $\vcn\in \Gamma_N$ enforces some sparsity on the solution. 
This phenomenon is illustrated in Proposition \ref{propmin} in more clarity. Moreover, 
we notice that the parameter $\alpha$ controls how sparse the obtained  solutions are. 
From a Bayesian point of view, $\alpha$ defines the subjective demand for model reduction. The parameter $\alpha$ can also be interpreted otherwise: 
consider two problems with parameters $\alpha$ and $\alpha'$.
It is straightforward to see that the problem \eqref{eq:global_problem2} with $\alpha$ is equivalent to a 
situation where $\alpha'=1$, but where $\Gamma_N'$ is normalized to $1/\alpha$, i.e., $\sum_{i=1}^N d_i' = 1/\alpha$.
The intuition is that a large $\alpha$  artificially enforces
a lower  cumulative strength  in order to achieve economically a reasonable model fit. 
In Section~\ref{sec:numerics} we explore the proposed method with various choices of $\alpha$.

\section{Optimization problems and algorithms}\label{sec:algo}

\subsection{The maximum a posteriori estimate revisited}

In this section we study problem \eqref{eq:global_problem2}, rewrite it into  alternative forms, and 
discuss algorithms to solve it. We propose two different algorithms, namely an  
alternating minimization procedure and an iterative shrinkage method based on eliminating 
the strength vector $\vcn$. 

\begin{proposition}\label{propmin3}
The optimization problem  \eqref{eq:global_problem2} is 
equivalent to the problem 
\begin{equation}
	\label{eq:global_problem3}
	\begin{split} 
(\bPhi_{\MAP},\vcn_{\MAP}) &\in  
\argmin_{\substack{\bPhi \in Y_n \\ \vcn\in \Gamma_N(\bd)\\ \bW{\vcn}^\dagger \bW{\vcn} \bPhi = \bPhi}} \Ffun(\bPhi,\vcn)  \\
 \Ffun(\bPhi,\vcn)& := \norm{\cetap^{-\oh}({\bf A}  \bPhi - \bphi)}^2_2 + 
 \alpha \norm{\bCovl^{-1/2}  \bW{\vcn}^\dagger \bPhi}^2_2,
 \end{split} 
\end{equation}
where $\bW{\vcn}^\dagger$ denotes the Moore-Penrose pseudoinverse, 
\[ \bW{\vcn}^\dagger =  {\rm diag}\left( \begin{cases} 
\frac{1}{\sqrt{\rho_j}} & \text{ if } \rho_j >0, \\
 0 & \text{ else } \end{cases} \right)_{j=1}^L   \in \R^{L\times L}.  \]
More precisely,  equivalence holds in the following sense: if $(\vcn_{\MAP},\bPsi_{\MAP})$ solves   \eqref{eq:global_problem2}, then 
 $(\vcn'_{\MAP},\bPhi_{\MAP}) :=(\vcn_{\MAP}, \bW{\vcn} \bPsi_{\MAP})$ 
 solves  \eqref{eq:global_problem3}, and if $(\vcn'_{\MAP},\bPhi_{\MAP}) $ solves  \eqref{eq:global_problem3},
 then $(\vcn_{\MAP}, \bPsi_{\MAP}):= (\vcn'_{\MAP},\bW{\vcn'_{\MAP}}^\dagger \bPhi_{\MAP})$ solves  \eqref{eq:global_problem2}.
\end{proposition}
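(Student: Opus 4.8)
The plan is to prove the equivalence by the explicit substitution $\bPhi = \bW{\vcn}\bPsi$ of \eqref{eq:reparametrization_of_phi} together with its partial inverse $\bPsi = \bW{\vcn}^\dagger\bPhi$, and to show that each of these maps sends feasible points of one problem to feasible points of the other while the objective values of $\Kfun$ and $\Ffun$ at corresponding points agree (with at worst a decrease when passing from $\Kfun$ to $\Ffun$). The one elementary fact I would record first is that, for any $\vcn$ with nonnegative entries, $P_{\vcn} := \bW{\vcn}^\dagger\bW{\vcn} = \bW{\vcn}\bW{\vcn}^\dagger$ is the orthogonal projection (in the Euclidean inner product) onto the coordinate subspace $\{\bPhi : \Phi^{(j)} = 0 \text{ whenever } \rho_j = 0\}$; in particular $P_{\vcn}\bW{\vcn} = \bW{\vcn}$, and, since $\bCovl = {\rm diag}(\Covl,\dots,\Covl)$ is block diagonal with exactly one block per layer while $P_{\vcn}$ multiplies each such block by $0$ or $1$, the operators $P_{\vcn}$ and $\bCovl^{-1/2}$ commute. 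Hence $\norm{\bCovl^{-1/2}P_{\vcn}\bPsi}_2 = \norm{P_{\vcn}\bCovl^{-1/2}\bPsi}_2 \le \norm{\bCovl^{-1/2}\bPsi}_2$.

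With this in hand I would establish two claims. \emph{Claim A:} if $\vcn\in\Gamma_N(\bd)$, $\bPsi\in Y_n$ and $\bPhi := \bW{\vcn}\bPsi$, then $(\bPhi,\vcn)$ is feasible for \eqref{eq:global_problem3} and $\Ffun(\bPhi,\vcn)\le\Kfun(\bPsi,\vcn)$. Indeed the side constraint holds since $\bW{\vcn}^\dagger\bW{\vcn}\bPhi = P_{\vcn}\bW{\vcn}\bPsi = \bW{\vcn}\bPsi = \bPhi$; the data terms agree because $\bA\bPhi = \bA\bW{\vcn}\bPsi$; and $\alpha\norm{\bCovl^{-1/2}\bW{\vcn}^\dagger\bPhi}_2^2 = \alpha\norm{\bCovl^{-1/2}P_{\vcn}\bPsi}_2^2 \le \alpha\norm{\bCovl^{-1/2}\bPsi}_2^2$ by the observation above. \emph{Claim B:} if $(\bPhi,\vcn)$ is feasible for \eqref{eq:global_problem3} and $\bPsi := \bW{\vcn}^\dagger\bPhi$, then, using the side constraint $P_{\vcn}\bPhi = \bPhi$, one has $\bW{\vcn}\bPsi = \bW{\vcn}\bW{\vcn}^\dagger\bPhi = P_{\vcn}\bPhi = \bPhi$; hence $(\bPsi,\vcn)$ is feasible for \eqref{eq:global_problem2}, $\bA\bW{\vcn}\bPsi = \bA\bPhi$, $\bCovl^{-1/2}\bPsi = \bCovl^{-1/2}\bW{\vcn}^\dagger\bPhi$, and therefore $\Kfun(\bPsi,\vcn) = \Ffun(\bPhi,\vcn)$ exactly. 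In both claims one also notes the routine fact that $\bW{\vcn}$ and $\bW{\vcn}^\dagger$ act layerwise (block-diagonally), so they map the admissible space $Y_n$ into itself.

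The proposition then follows mechanically. Applying Claim A to a minimizer of \eqref{eq:global_problem2} and Claim B to a minimizer of \eqref{eq:global_problem3} shows that the two problems have the same optimal value. Consequently, if $(\vcn_{\MAP},\bPsi_{\MAP})$ minimizes \eqref{eq:global_problem2}, then by Claim A the feasible point $(\vcn_{\MAP},\bW{\vcn_{\MAP}}\bPsi_{\MAP})$ of \eqref{eq:global_problem3} attains a value $\le\Kfun(\bPsi_{\MAP},\vcn_{\MAP})$, which equals the optimal value of \eqref{eq:global_problem3}, so it is a minimizer there; symmetrically, if $(\vcn'_{\MAP},\bPhi_{\MAP})$ minimizes \eqref{eq:global_problem3}, then by Claim B the feasible point $(\vcn'_{\MAP},\bW{\vcn'_{\MAP}}^\dagger\bPhi_{\MAP})$ of \eqref{eq:global_problem2} attains the same value of $\Kfun$ and hence minimizes \eqref{eq:global_problem2}. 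This is exactly the two-sided correspondence asserted.

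I do not expect a genuine obstacle here; the only place requiring care is the handling of the coordinates with $\rho_j = 0$. It is precisely there that the side constraint $\bW{\vcn}^\dagger\bW{\vcn}\bPhi = \bPhi$ in \eqref{eq:global_problem3} is needed, namely to recover $\bW{\vcn}\bW{\vcn}^\dagger\bPhi = \bPhi$ in Claim B, and it is precisely there that one must invoke the commutation of $P_{\vcn}$ with $\bCovl^{-1/2}$ so that the regularization term does not increase under $\bPsi\mapsto\bW{\vcn}\bPsi$ in Claim A; without the block-diagonal structure of $\bCovl$ this inequality could fail. Everything else is a direct substitution into $\Kfun$ and $\Ffun$.
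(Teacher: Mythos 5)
Your proof is correct and follows essentially the same route as the paper: the substitution $\bPhi=\bW{\vcn}\bPsi$, $\bPsi=\bW{\vcn}^\dagger\bPhi$, together with the observation that layers with $\rho_j=0$ contribute nothing to the data term but nonnegatively to the penalty. The only organizational difference is that the paper first argues that the zero-strength components of $\bPsi_{\MAP}$ must vanish at a minimizer and then treats the substitution as a bijection between the constrained problems, whereas you encode the same fact in the pointwise inequality $\Ffun(\bW{\vcn}\bPsi,\vcn)\le\Kfun(\bPsi,\vcn)$ of your Claim A paired with the exact equality of Claim B; both arguments yield the asserted two-sided correspondence.
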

\begin{proof}
Let 
$(\vcn_{\MAP},\bPsi_{\MAP})$ solve   \eqref{eq:global_problem2}. Let ${\mathcal I} = \{i\,|\, (\vcn_{\MAP})_i = 0\}$ be the zero indices of 
$\vcn_{\MAP}$. Since $(\Psi_{\MAP}^{(i)})_{i\in {\mathcal I}}$ do not contribute to the first term in  \eqref{eq:global_problem2} and 
the second term is a sum of squares over the layers, it follows by the minimization property that the corresponding entries in 
$\bPsi$ must vanish: $(\Psi_{\MAP}^{(i)})_{i\in {\mathcal I}} = 0.$ Therefore, problem \eqref{eq:global_problem2} is equivalent to the same 
optimization problem with the additional constraint $\bW{\vcn}^\dagger \bW{\vcn} \bPsi = \bPsi$.  
Hence, by setting 
$\bPhi_{\MAP}:= \bW{\vcn} \bPsi_{\MAP},$ we arrive at  \eqref{eq:global_problem3} with the additional constraint
that there exists a $\bPsi$ with $\bPhi = \bW{\vcn} \bPsi$ and $\bW{\vcn}^\dagger \bPhi = \bPsi$. It is a simple calculation that this 
constraint is satisfied if and only if $\bW{\vcn}^\dagger \bW{\vcn} \bPhi = \bPhi.$ The same argument reversed implies that 
solutions to \eqref{eq:global_problem3}  yield corresponding  solutions to \eqref{eq:global_problem2}. 
\end{proof}

Next, we eliminate the strength vector $\vcn$ from the optimization problem by minimizing over it. 
\begin{proposition}\label{propmin}
Let $(\bPhi_{\MAP},\vcn_{\MAP}) \in Y_n \times \Gamma_N(\bd)$ be minimizers of \eqref{eq:global_problem3}
for some $\bd>0$. Then $\bPhi_{\MAP}$ is also a solution of the 
optimization problem 
\begin{equation}\label{eq:reduced}
\begin{split} 
\bPhi_{\MAP} &= \argmin_{\bPhi \in Y_n} \Gfun(\bPhi), \\
\Gfun(\bPhi)&:= \norm{\cetap^{-\oh}({\bf A} \bPhi - \bphi)}^2_2 + \alpha 
\sum_{i=1}^N \frac{1}{d_i} \left(\sum_{l\in {\mathcal B}_i} \norm{C_{\Phi}^{-1/2}\Phi^{(l)}}\right)^2, 
\end{split}
\end{equation}
where $d_i$ are as in equation \eqref{eq:d_i}. If for some cluster ${\mathcal B}_i,$ the vector $(\bPhi_{\MAP}^{(l)})_{l\in {\mathcal B}_i}$ is 
non-zero, the $l$-th component of 
the strength vectors at this cluster,  $(\vcn_{\MAP})_l,$ is uniquely defined by 
\begin{equation}\label{eq:vcnsol}  \qquad (\vcn_{\MAP})_l = d_i
\frac{\norm{C_{\Phi}^{-1/2}\Phi_{\MAP}^{(l)}}}{\sum_{l \in {\mathcal B}_i} \norm{C_{\Phi}^{-1/2}\Phi_{\MAP}^{(l)}} }, 
\quad \forall l \in {\mathcal B}_i. \end{equation}
\end{proposition}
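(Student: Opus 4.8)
The plan is to carry out the minimization in \eqref{eq:global_problem3} in two stages, first over $\vcn$ with $\bPhi$ held fixed and then over $\bPhi$. For a fixed $\bPhi$ the residual term $\norm{\cetap^{-\oh}(\bA\bPhi-\bphi)}^2_2$ is constant, so the inner minimization concerns only $\alpha\norm{\bCovl^{-1/2}\bW{\vcn}^\dagger\bPhi}^2_2$; I claim that its value is exactly the group-sparsity penalty in $\Gfun$ and that its minimizer is \eqref{eq:vcnsol} whenever it is determined at all. Passing from \eqref{eq:global_problem3} to \eqref{eq:reduced} then follows by assembling these pieces.

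First I would unfold the penalty term using the diagonal structure $\bCovl^{-1/2}=\diag(C_{\Phi}^{-1/2},\dots,C_{\Phi}^{-1/2})$ together with the explicit form of $\bW{\vcn}^\dagger$ recalled in Proposition~\ref{propmin3}, which gives, for any pair satisfying the side constraint $\bW{\vcn}^\dagger\bW{\vcn}\bPhi=\bPhi$,
\[
\norm{\bCovl^{-1/2}\bW{\vcn}^\dagger\bPhi}^2_2=\sum_{l\,:\,\rho_l>0}\frac{\norm{C_{\Phi}^{-1/2}\Phi^{(l)}}^2}{\rho_l},
\]
since that constraint forces $\Phi^{(l)}=0$ exactly at the layers with $\rho_l=0$, so the omitted indices contribute nothing anyway. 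Writing $a_l:=\norm{C_{\Phi}^{-1/2}\Phi^{(l)}}\ge 0$, both this expression and the constraint set $\Gamma_N(\bd)$ split across the clusters ${\mathcal B}_1,\dots,{\mathcal B}_N$, so the inner problem decouples into $N$ subproblems: minimize $\sum_{l\in{\mathcal B}_i}a_l^2/\rho_l$ over $\rho_l\ge 0$ with $\sum_{l\in{\mathcal B}_i}\rho_l=d_i$ and $\rho_l>0$ whenever $a_l>0$.

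Next I would solve each cluster subproblem by Cauchy--Schwarz. On a cluster with $(\Phi^{(l)})_{l\in{\mathcal B}_i}\ne 0$, set $S_i:=\sum_{l\in{\mathcal B}_i}a_l>0$; restricting to the indices with $a_l>0$,
\[
S_i^2=\Big(\sum_{l\in{\mathcal B}_i}\tfrac{a_l}{\sqrt{\rho_l}}\,\sqrt{\rho_l}\Big)^{2}\le\Big(\sum_{l\in{\mathcal B}_i}\tfrac{a_l^2}{\rho_l}\Big)\Big(\sum_{l\in{\mathcal B}_i}\rho_l\Big)=d_i\sum_{l\in{\mathcal B}_i}\tfrac{a_l^2}{\rho_l},
\]
so the $i$-th term is at least $\tfrac1{d_i}S_i^2$, and the two inequalities used are equalities precisely when $\rho_l=0$ for indices with $a_l=0$ and $\rho_l\propto a_l$ for the rest; normalizing to $\sum_{l\in{\mathcal B}_i}\rho_l=d_i$ then pins down the unique minimizer $(\vcn_{\MAP})_l=d_i\,a_l/S_i$, which is \eqref{eq:vcnsol} and is automatically feasible. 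On a cluster with $(\Phi^{(l)})_{l\in{\mathcal B}_i}=0$ the $i$-th term is zero for every feasible $\vcn$, so $\vcn$ is undetermined there, as the statement allows; note $\bd>0$ is needed both for feasibility and for $\tfrac1{d_i}S_i^2$ to be defined. Summing the $N$ cluster values gives $\min_{\vcn}\Ffun(\bPhi,\vcn)=\Gfun(\bPhi)$ for every fixed $\bPhi\in Y_n$, the minimum being over $\vcn\in\Gamma_N(\bd)$ with $\bW{\vcn}^\dagger\bW{\vcn}\bPhi=\bPhi$. Since for any $\bPhi\in Y_n$ one can choose a feasible $\vcn$ (e.g.\ $\rho_l=d_i/|{\mathcal B}_i|$ on ${\mathcal B}_i$, using $\bd>0$), the set of feasible $\bPhi$ in \eqref{eq:global_problem3} is all of $Y_n$, so $\min_{(\bPhi,\vcn)}\Ffun=\min_{\bPhi\in Y_n}\Gfun(\bPhi)$. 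Therefore, if $(\bPhi_{\MAP},\vcn_{\MAP})$ minimizes \eqref{eq:global_problem3}, comparing values shows $\bPhi_{\MAP}$ minimizes $\Gfun$ and $\vcn_{\MAP}$ attains the inner minimum for $\bPhi_{\MAP}$, which on each cluster with $(\Phi_{\MAP}^{(l)})_{l\in{\mathcal B}_i}\ne 0$ is \eqref{eq:vcnsol}.

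The main obstacle here is not analytic depth but careful bookkeeping around degenerate indices: reconciling the Moore--Penrose constraint $\bW{\vcn}^\dagger\bW{\vcn}\bPhi=\bPhi$ with the convention $a_l^2/\rho_l=0$ when $a_l=\rho_l=0$, checking that putting zero mass on layers with $\Phi^{(l)}=0$ is the unique optimum inside a nontrivial cluster but genuinely arbitrary inside a trivial one, and verifying that the equality case of Cauchy--Schwarz forces uniqueness of $\vcn_{\MAP}$ on exactly the clusters claimed.
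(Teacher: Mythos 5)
Your proof is correct and follows essentially the same route as the paper: fix $\bPhi$, observe the data term is independent of $\vcn$, minimize the penalty clusterwise over $\Gamma_N(\bd)$ to obtain \eqref{eq:vcnsol}, plug back in to get $\Gfun$, and conclude by the standard two-stage comparison of values. The only difference is that you make explicit (via Cauchy--Schwarz and its equality case) the inner clusterwise minimization that the paper states without detail, which also cleanly handles the degenerate clusters and the uniqueness claim.
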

\begin{proof}
With $\bPhi_{\MAP}$ a solution to \eqref{eq:global_problem3}, we can compare the function values of 
$(\bPhi_{\MAP},\vcn_{\MAP})$ with all admissible values for $(\bPhi_{\MAP},\vcn),$
i.e., where $\vcn$ satisfies the constraints $\vcn \in  \Gamma_N(\bd)$ and $\rho_l >0$ whenever 
$\Phi_{\MAP}^{(l)} \not = 0.$ 
%
%
Since $\vcn_{\MAP}$ must be minimal, it follows that 
\[ \vcn_{\MAP} \in  \argmin_{\vcn \in \Gamma_n(\bd)}  
\sum_{i=1}^N \sum_{l\in {\mathcal B}_i, \Phi^{(l)} \not = 0} \frac{1}{\rho_l}\norm{C_{\Phi}^{-1/2}\Phi_{\MAP}^{(l)}}^2
 \]
since the first term in \eqref{eq:global_problem3} does not depend on $\vcn$. 
This optimization problem can  be solved yielding \eqref{eq:vcnsol} 
for those clusters ${\mathcal B}_i$  where $\Phi_{\MAP}^{(l)}$ is not all zero for $l \in {\mathcal B}_i$. 
For the other clusters the functional value and the constraints  are independent of the strength vector on these
clusters, hence $(\vcn_{\MAP})_l$ can be arbitrary with the only restriction that 
$ \sum_{l \in {\mathcal B}_k} \rho_l= d_k$ on these clusters.  By plugging in the formula for $\vcn_{\MAP}$ 
in  \eqref{eq:vcnsol}  into 
the functional in \eqref{eq:global_problem3}, we find that its optimal value is that of 
$\Gfun(\bPhi_{\MAP})$ in  \eqref{eq:reduced}. Conversely, for any $\bPhi$, we can construct a $\vcn$ 
as given in  \eqref{eq:vcnsol} such that $\Gfun(\bPhi)$ equals $\Ffun(\bPhi,\vcn)$ in  \eqref{eq:global_problem3}. Thus, it follows that 
$\bPhi_{\MAP}$ must be a minimizer of $\Gfun$ as well and this proves the proposition.  
\end{proof}

From these results we can study existence and uniqueness of minimizers of the  functionals above.  
\begin{proposition}
Let $\alpha, \bd>0$, matrices $\bCovl$ and $\cetap$ be positive definite and $(\bPhi,\vcn) \in Y_n \times \Gamma_N(\bd).$ Then problems 
\eqref{eq:global_problem2}, \eqref{eq:global_problem3} and \eqref{eq:reduced}  have 
minimizers. The minimizer $\bPhi_{\MAP}$ in  \eqref{eq:global_problem3} and \eqref{eq:reduced}  is unique 
and  $\vcn_{\MAP}$ is unique at a cluster ${\mathcal B}_i$  if  $(\bPhi_{\MAP}^{(l)})_{l\in {\mathcal B}_i} \not = 0.$ 
\end{proposition}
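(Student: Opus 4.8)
The plan is to treat the reduced problem \eqref{eq:reduced} as the master problem and transport existence and uniqueness back to \eqref{eq:global_problem3} and \eqref{eq:global_problem2} through the equivalences already established in Propositions~\ref{propmin3} and~\ref{propmin}. For existence I would first note that $\Gfun$ is continuous on the finite-dimensional space $Y_n$, being a quadratic plus a finite sum of squares of sums of Euclidean norms. Then I would verify coercivity: using $(\sum_k x_k)^2\ge\sum_k x_k^2$ for $x_k\ge 0$, positivity of the weights $1/d_i$, the fact that the clusters $\mathcal B_1,\dots,\mathcal B_N$ partition $\{1,\dots,L\}$, and positive definiteness of $\bCovl$, one obtains a bound $\Gfun(\bPhi)\ge c\,\norm{\bPhi}^2_2$ with $c>0$, so $\Gfun$ is coercive. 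By the Weierstrass theorem \eqref{eq:reduced} has a minimizer $\bPhi_{\MAP}$; the equivalence in Proposition~\ref{propmin} then produces a minimizer of \eqref{eq:global_problem3}, and Proposition~\ref{propmin3} one of \eqref{eq:global_problem2}. (Alternatively, existence for \eqref{eq:global_problem2} can be seen directly: $\Kfun$ is continuous, coercive in $\bPsi$ because $\bCovl$ is positive definite, and $\vcn$ ranges over the compact simplex product $\Gamma_N(\bd)$.)

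For uniqueness I would start from the fact that $\Gfun$ is convex: the data term is a convex quadratic composed with an affine map, and each cluster term $\phi_i(\bPhi):=\left(\sum_{l\in\mathcal B_i}\norm{\Covl^{-1/2}\Phi^{(l)}}\right)^2$ is convex, being the square of a sum of norms with $t\mapsto t^2$ convex and nondecreasing on $[0,\infty)$. Suppose now $\bPhi_1\neq\bPhi_2$ both minimize $\Gfun$, with layer blocks $\Phi_1^{(l)},\Phi_2^{(l)}$, and let $\bar\bPhi=\tfrac12(\bPhi_1+\bPhi_2)$, which is again a minimizer. Strict convexity of $z\mapsto\norm{z}^2_2$ forces $\bA\bPhi_1=\bA\bPhi_2$, since otherwise the data term at $\bar\bPhi$ would be strictly below the average of its values at $\bPhi_1,\bPhi_2$ and $\Gfun(\bar\bPhi)$ would fall below the minimum. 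Hence the data terms at $\bPhi_1,\bPhi_2,\bar\bPhi$ agree, so the penalties agree, and convexity of $\phi_i$ forces $\phi_i(\bar\bPhi)=\tfrac12\phi_i(\bPhi_1)+\tfrac12\phi_i(\bPhi_2)$ for every $i$. Expanding this (using strict convexity of $s\mapsto s^2$) first yields $\sum_{l\in\mathcal B_i}\norm{\Covl^{-1/2}\Phi_1^{(l)}}=\sum_{l\in\mathcal B_i}\norm{\Covl^{-1/2}\Phi_2^{(l)}}$ for each $i$, and then the equality $\sum_{l\in\mathcal B_i}\norm{\Covl^{-1/2}(\Phi_1^{(l)}+\Phi_2^{(l)})}=\sum_{l\in\mathcal B_i}\left(\norm{\Covl^{-1/2}\Phi_1^{(l)}}+\norm{\Covl^{-1/2}\Phi_2^{(l)}}\right)$; since the Euclidean norm is strictly convex and $\Covl^{-1/2}$ is injective, the triangle inequality must be a termwise equality, i.e.\ $\Phi_2^{(l)}$ is a nonnegative multiple of $\Phi_1^{(l)}$ for every layer $l$.

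The step I expect to be the main obstacle is to upgrade this ``layerwise parallel, identical $\bA$-image'' structure to $\bPhi_1=\bPhi_2$ --- equivalently, to exclude a segment of distinct minimizers lying simultaneously in $\ker\bA$ and in a flat face of the penalty. I would use the first-order optimality condition for \eqref{eq:reduced}: the vector $\xi:=-\tfrac{2}{\alpha}\bA^\top\cetap^{-1}(\bA\bPhi_1-\bphi)$, which by $\bA\bPhi_1=\bA\bPhi_2$ equals the corresponding vector at $\bPhi_2$, must lie in the subdifferential of the penalty $\sum_i\tfrac1{d_i}\phi_i$ at $\bPhi_1$ and at $\bPhi_2$ simultaneously; one then has to show that this subdifferential, together with the parallelism derived above, pins each cluster block $(\Phi^{(l)})_{l\in\mathcal B_i}$ down uniquely. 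If the discretized $\bA$ happens to be injective this step is vacuous (then $\Gfun$ is outright strictly convex); in general it is precisely here that the geometry of the tomography operator enters --- the distinct guide-star directions $\alpha_g$, altitudes $h_l$ and cone-scaling factors $c_l$ must be such that no nonzero parallel layer family is invisible to $\bA$.

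Finally, once $\bPhi_{\MAP}$ is known to be unique, uniqueness of $(\vcn_{\MAP})_l$ on a cluster $\mathcal B_i$ with $(\bPhi_{\MAP}^{(l)})_{l\in\mathcal B_i}\neq 0$ is immediate from the explicit formula \eqref{eq:vcnsol} in Proposition~\ref{propmin}, which expresses $(\vcn_{\MAP})_l$ through the now-unique $\bPhi_{\MAP}$ alone; on the clusters where $(\bPhi_{\MAP}^{(l)})_{l\in\mathcal B_i}$ vanishes, $\vcn$ need only satisfy $\sum_{l\in\mathcal B_i}\rho_l=d_i$ and is genuinely non-unique, which explains the restriction in the statement.
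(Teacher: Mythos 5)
Your existence argument and your treatment of $\vcn_{\MAP}$ are fine and essentially what the paper does: the paper obtains a minimizer of \eqref{eq:global_problem2} by a routine finite-dimensional compactness/continuity argument (your coercivity bound for $\Gfun$, or your alternative direct argument using compactness of $\Gamma_N(\bd)$ and coercivity of $\Kfun$ in $\bPsi$, fills in the same step) and transports it to \eqref{eq:global_problem3} and \eqref{eq:reduced} via Propositions~\ref{propmin3} and \ref{propmin}; uniqueness of $(\vcn_{\MAP})_l$ on clusters where $\bPhi_{\MAP}$ does not vanish is read off from formula \eqref{eq:vcnsol}, exactly as you do.

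The genuine gap is the uniqueness of $\bPhi_{\MAP}$, which you explicitly leave open. Your convexity analysis correctly reduces non-uniqueness to the existence of two distinct minimizers with the same $\bA$-image that are layerwise parallel with equal cluster sums $\sum_{l\in\mathcal{B}_i}\norm{\Covl^{-1/2}\Phi^{(l)}}$, but the subdifferential argument you sketch cannot close this: two such points have the same residual and the same penalty value, the whole segment between them consists of minimizers, and they satisfy identical first-order conditions, so optimality conditions alone cannot separate them --- what is needed is an argument that such directions are incompatible with $\ker\bA$, and you concede this is unresolved unless $\bA$ is injective. The paper settles this point in one line by asserting that $\Gfun$ in \eqref{eq:reduced} is strictly convex in $\bPhi$, so to match the paper you would simply invoke that claim; note, however, that your caution is well-founded, since the squared cluster-$\ell^1$ penalty is not strictly convex on its own (moving mass between two layers of one cluster while keeping the $\Covl^{-1/2}$-norms balanced leaves it constant along a segment), so strict convexity of $\Gfun$ is automatic only when the quadratic data term is strictly convex on those flat directions, e.g.\ when $\bA$ is injective --- a hypothesis the proposition does not state and which is not generic in the underdetermined tomography setting. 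As submitted, your proposal therefore proves existence and the $\vcn$-statement but not the uniqueness of $\bPhi_{\MAP}$; the paper's proof asserts it via strict convexity without addressing the very degeneracy your analysis isolates.
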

\begin{proof}
Since the problem is finite dimensional, it follows by 
straightforward argumentation that \eqref{eq:global_problem2} has a solution. Note that 
the functional $\Kfun$ is continuous in both arguments. By Propositions~\ref{propmin3}, \ref{propmin} 
the same holds for \eqref{eq:global_problem3} and  \eqref{eq:vcnsol}. 
The functional $\Gfun$ in \eqref{eq:reduced} is strictly convex in $\bPhi$, thus the minimizer  $\bPhi_{\MAP}$ is unique, 
by  formula \eqref{eq:vcnsol}, $\vcn_{\MAP}$ is unique  if $\bPhi_{\MAP}$ is not all zero at a cluster. 
\end{proof}

The reduced problem \eqref{eq:reduced} can be interpreted as a regularization with 
penalty term being of the type of an $l^1$-norm squared. It is well-known that such a penalty 
enforces sparsity, i.e., solutions to \eqref{eq:global_problem3} will lead to 
a $\bPhi$ that vanishes on many layers (if the real $\bPhi$ is sparse and  $\alpha$ is sufficiently large). 
This is the reason why \eqref{eq:global_problem} ($L = 1$) is not appropriate,  because the corresponding estimator 
usually has only values on  very few layers, which is not how a real atmosphere behaves. 
The introduction of the clustering structure of the layers and the weights $\bd$ allows for 
more flexibility and ``sparsifies'' less, depending on the weights.

\subsection{Alternating Minimization Algorithm}\label{subsec:AMA}

In Proposition~\ref{propmin} we  calculated the minimizers with respect to 
$\vcn,$ when $\bPhi$ is given. 
This suggests to apply an alternating minimization procedure for solving 
\eqref{eq:global_problem3}, where the approximate solutions 
$\bPhi_k,\vcn_k,$ $k = 1,\ldots $ are defined iteratively as 
\begin{align}\label{exactAMA}
\bPhi_{k+1}:= \argmin_{\substack{\bPhi  \in Y_n, \\ \bW{\vcn_k}^\dagger \bW{\vcn_k} \bPhi = \bPhi} } \Ffun(\bPhi,\vcn_k) \qquad  
\vcn_{k+1}:= \argmin_{\vcn \in  \Gamma_N(\bd)} \Ffun(\bPhi_{k+1},\vcn_k). 
\end{align}
Note that $\vcn_{k+1}$ can be computed explicitly by 
\eqref{eq:vcnsol} (taking special care in case of $\bPhi$ being zero at a cluster). 
For numerical calculations, the evaluation of $\bW{\vcn}^\dagger$ is problematic as it can lead to numerical overflow
if $\vcn$ has very small entries. A not  uncommon  remedy is to regularize $\bWem$ to prevent it becoming near singular. 
Thus, we approximate problem \eqref{eq:global_problem3} by adding a small value $\epsilon$ to $\rho_l$ in $\bWem,$ 
i.e., $\bWeps{\vcn}:= \bW{\vcn+\epsilon}$ is used in place of $\bW{\vcn}$.  This has the additional effect that 
the constraint  $\bW{\vcn}^\dagger \bW{\vcn} \bPhi = \bPhi$ is trivially fulfilled and can be dropped.  

The computation of $\bPhi_{k+1}$ involves then a  quadratic 
optimization problem, hence, $\bPhi_{k+1}$ is given as  solution to the optimality conditions 
\begin{equation}
\label{eq:opt_condition}
\left(\bA^T  \cetap^{-1}  \bA + \alpha\cdot \bP(\vcn_{k})^{-1} \bCovl^{-1}\right)\bPhi_{k+1} =\bA^T \cetap^{-1}  \bphi,.
\end{equation}
with 
\begin{equation}\label{eq:cnmatrix} \bP(\vcn)^{-1}:= {\bWeps{\vcn}^*}^\dagger \bWeps{\vcn}^\dagger = 
\diag(\tfrac{1}{\rho_l +\epsilon})_{l=0}^L.  \end{equation}
However, 
as this system is usually of large scale, we modify this step and compute 
$\bPhi_{k+1}$ in \eqref{eq:opt_condition} by an approximate minimization using a finite number of steps of an 
iterative procedure like a gradient method. 
The resulting algorithm reads as follows:

 \begin{algorithm}[H]
 \caption{Alternating Minimization Algorithm in Blocks of Layers}\label{alg_altmin1}
 \begin{algorithmic}
 \State Parameter: $\alpha, \bd>0$, INNER, the number of inner gradient iterations, 
 $\bCovl^{-1},$ $\cetap^{-1},$ and a sequence of stepsizes $(\tau_{i})_k$ and initial 
 guesses $\vcn_{\rm init}, \bPhi_{\rm init}.$
 
 \noindent
 Set $\vcn_{1} = \vcn_{\rm init}$, $\bP(\vcn_{1})$ as in \eqref{eq:cnmatrix}, 
 $\bPhi_{1,0} = \bPhi_{\rm init}.$
\For{$i=1, \dots$ \text{ until convergence} }
\For{$k=1,\dots \text{INNER}$} \Comment{perform one or several steps of a gradient method}
\State $\bPhi_{i,k} = \bPhi_{i,k-1} + \tau_{i,k-1} \left( \bA^\ast \cetap^{-1}(\bphi - \bA \bPhi_{i,k-1}) - 
\alpha \cdot \bP(\vcn_{i})^{-1} \bCovl^{-1}\bPhi_{i,k-1}\right)$ 
 \EndFor
\State$\bPhi_{i+1,0} := \bPhi_{i,k}$ 
 \For {$n = 1, \dots , N$} 
 \For {$l  \in {\mathcal B}_n}$ 
 \State $\rho_l = d_n\cdot \norm{C_{\Phi}^{-1/2}\Phi_{i,k}^{(l)}}\Big/\dis\sum_{l\in {\mathcal B}_n}
 \norm{C_{\Phi}^{-1/2}\Phi_{i,k}^{(l)}}$  \Comment{update turbulence weights}
 \EndFor
 \EndFor
  \State$\vcn_{i+1} := (\rho_1,\ldots, \rho_N).$ 
\EndFor
\end{algorithmic}
\label{alg_altmin_block}
\end{algorithm} 
In Algorithm~\ref{alg_altmin1}, several implementation details are left out for simplicity. 
Of course, a standard convergence criterion for the outer loop is required. The parameter INNER defines 
the number of inner iterations. If formally $\text{INNER}=\infty$ (and assuming the gradient method converges),
then we obtain the  exact alternating
minimization in \eqref{exactAMA}. We preset the  parameter INNER to a fixed value as described below. 
The sequence of stepsizes $\tau_k$ can be chosen constant with the standard convergence constraint
$\tau_k <  \lambda_{\max}(\bA^\ast \cetap^{-1} \bA).$  However, we allow for more advanced non-constant stepsizes 
as well. 

Despite its deficiencies, the alternating minimization algorithm has one  big advantage over other methods 
like those that minimize problem \eqref{eq:global_problem2}: it is rather simple, and most important, 
it can be included easily into existing software. In fact, the inner iteration with the iteration 
variable $k$ for $\bPhi_k$ is exactly a standard gradient method for the atmospheric tomography problem when the 
turbulence strength is known as it is commonly used in AO. 
From the aspect of implementation, if a standard atmospheric tomography solver 
is already available, then the only additional procedure, namely calculating $\vcn$, is straightforward, 
and thus the alternating minimization method 
is a convenient choice. In particular, this applies to the dynamic (time-dependent) reconstruction problem, 
where the method can be easily adapted to.

Let us remark on the convergence of Algorithm~\ref{alg_altmin1}. Such alternating minimization 
procedures are well-known and can be viewed as  nonlinear variants of Gauss-Seidel iterations. 
A convergence proof (limit points of the sequence are stationary points) 
in the discretized (i.e., finite-dimensional case) can, e.g., be found in \cite[Proposition~2.7.1]{Bert} 
and the references therein.  Note, however, that for general functionals the alternating procedure does not have to 
converge, as it was shown by a counterexample of Powell \cite{pow73}.
The cited convergence result in \cite{Bert} applies to our case if we
restrict the problem to having all positive 
strength $\vcn \geq\epsilon >0$ (with a small parameter $\epsilon$, e.g., of the order of machine epsilon).  
Using these results and  the positivity restriction (which is just another way of introducing a regularization
of  $\bP(\vcn)^{-1}$ similar to \eqref{eq:cnmatrix})
it can be concluded that then  the exact iteration in \eqref{exactAMA} converges  to 
the unique minimum. 

We also mention that 
the transformation of problem  \eqref{eq:global_problem2} to problem \eqref{eq:global_problem3} and  the associated 
algorithms have also been applied in other contexts like in  variational image processing for nondifferentiable penalty functionals, 
e.g., for  the ROF-functional. There the corresponding alternating minimization method is known under 
the name of half-quadratic minimization \cite[Section 3.2.4]{Korn}, \cite{ChaLi} and also by the 
name of iteratively reweighted minimization \cite{RoWo}.

According to  Proposition~\ref{propmin3}, the two optimization problems \eqref{eq:global_problem} and
\eqref{eq:global_problem2} are equivalent and a similar alternating minimization algorithm for 
\eqref{eq:global_problem} can be designed. Since \eqref{eq:global_problem2} arises from a change of 
variables, it is not surprising that the corresponding iteration for $\bPhi_{i,k}$ is similar to that 
in Algorithm~\ref{alg_altmin1} but where the gradient direction is preconditioned by $\bP(\vcn_i).$ 
The reason why we prefer \eqref{eq:global_problem2} over  \eqref{eq:global_problem} is again the 
compatibility with existing code for the reconstruction of atmospheric turbulence, where usually $\bPhi$ 
is used as the unknown and not $\bPsi$.

In the next section we discuss an algorithm for solving  \eqref{eq:global_problem3}. As we will see, it
does not require an $\epsilon$-regularization of $\bWem$ and is hence more robust for 
the case of layers with vanishing  strength $\rho_i=0$.  It looses, however, the implementational benefit of 
Algorithm~\ref{alg_altmin1} of the compatibility with existing code.

\subsection{Iterative shrinkage-thresholding algorithm} 

The second algorithm for solving \eqref{eq:global_problem3} is based on $\Gfun$ defined in  \eqref{eq:reduced}, 
which includes a  nondifferentiable term (the $l^1$-norm squared). 
In particular, classical derivative-based method cannot be applied, but tools from convex analysis have to be 
employed.
%
The functional $\Gfun(\bPhi)$ can be split into two parts, a quadratic one $f$  and a convex nondifferentiable one $g$:
\begin{equation}\label{eq:split}  \Gfun(\bPhi) = f(\bPhi) + g(\bPhi) \, .
\end{equation}
There is an arsenal of methods to tackle the corresponding optimization problems, 
for instance, primal-dual algorithms \cite{ZC}, augmented Lagrangian methods \cite{He,Po}, Alternating Direction Method of 
Multipliers \cite{Boyd}, Bregman \cite{OBGXY} and split Bregman \cite{GoOs} methods,
just to mention a few. 

Since we want a simple yet  efficient one, we propose to use the  iterative shrinkage-thresholding algorithm (ISTA)
of Beck and Teboulle \cite{BeTe09} together with its  accelerated version FISTA. 
 ISTA is defined  as follows:
a parameter $\lip$  is fixed, which is set to be larger than the Lipschitz constant of $f$. 
In ISTA, a sequence of iterations $\bPhi_k$ for minimizing \eqref{eq:split}
is defined as
\begin{align*}
\bPhi_{k+1} :&= \text{prox}_{\lip^{-1}}(\bPhi_{k} - \tfrac{1}{\lip} \nabla f(\bPhi_k)), \qquad \text{where} \\
\text{prox}_{\lip^{-1}}({\bf u})&:= \argmin_{{\bf v}} g({\bf v}) + \tfrac{\lip}{2}\|{\bf u} -{\bf v}\|^2.  
\end{align*}
The function $\text{prox}_{\lip^{-1}}$ is called the proximal mapping. The efficiency of the method is based 
on the fact that the proximal mapping can often be calculated analytically. In case of $g$ being the $l^1$-norm, 
it amounts to applying a simple soft-shrinkage operator. Thus, to employ ISTA, we have to calculate the proximal mapping 
associated to the last term in \eqref{eq:reduced}. In order to do so, we first rewrite the optimization 
problem by a change of variables 
\begin{equation}\label{eq:relation} \myPsi^{(l)} := 
\Covl^{-1/2}\Phi^{(l)}  \quad \Longleftrightarrow \quad 
\Phi^{(l)} =  \Covl^{1/2} \myPsi^{(l)}, \qquad  \quad  l = 1,\ldots, L.
\end{equation}
In the new variables, problem \eqref{eq:reduced} reads as 
\begin{equation}
\label{eq:gdef}
\bmyPsi = \argmin_{\bmyPsi} \tilde{\Gfun}(\bmyPsi), \quad 
\tilde{\Gfun}(\bmyPsi) := \Gfun\left( \bCovl^{1/2} \bmyPsi \right), 
\quad 
\tilde{\Gfun}(\bmyPsi) = \tilde{f}(\bmyPsi) + \tilde{g}(\bmyPsi)
\end{equation}
where
\begin{equation*}
	\tilde{f}(\bmyPsi):= \norm{\cetap^{-1}\left({\bf A} \bCovl^{1/2} \bmyPsi  - \bphi\right)}^2_2 \quad {\rm and} \quad
\tilde{g}(\bmyPsi):= \alpha  \sum_{i=1}^L \frac{1}{d_i} \left(\sum_{l\in {\mathcal B}_i} \norm{\myPsi^{(l)}}\right)^2. 
\end{equation*}
Clearly the minimizers of $\Gfun$ and $\tilde{\Gfun}$ are related by \eqref{eq:relation}. 
The proximal mapping for $\tilde{g}$ is now separable over the clusters,
\[ \text{prox}_{\lip^{-1}}({\bf u}) = \argmin_{{\bf v} =(v^{(1)},\ldots, v^{(L)})} 
\sum_{i=1}^L \left[ 
\tfrac{\alpha}{d_i} 
\left( \sum_{l \in {\mathcal B}_i} \|{v}^{(l)}\| \right)^2 + 
 \tfrac{\lip}{2} \sum_{l \in {\mathcal B}_i} \|{u}^{(l)} -{v}^{(l)} \|^2 \right], \]
such that ${ v}^{(l)}$ can be computed clusterwise independently. The minimizer of 
the functional inside the brackets can  be found explicitly; see, e.g., \cite{SaLeAsLoNa13}.
It involves a soft-thresholding within each cluster  
with threshold parameters $\gamma_\ind$, $\ind = 1,\ldots N$, 
\[ \text{prox}_{\lip^{-1}}(\bPsi)  = \max\left\{\Psi^{(l)} - \gamma_\ind ,0\right\} \cdot 
\frac{\Psi^{(l)}}{\|\Psi^{(l)}\|},  \quad \forall l \in {\mathcal B}_\ind,  \ind = 1,\ldots, N, \]
and $\gamma_\ind$ is calculated by the following algorithm, which we state 
in the variables $\Phi^{(l)} =  \Covl^{1/2} \myPsi^{(l)}$ for later reference.

\begin{algorithm}[H]
 \caption{Proximal mapping threshold parameter}
 \begin{algorithmic}
\State {\bf Input:} $\bPhi_k$, $\Covl$, $\alpha,$ $\lip$, $d_\ind,$ $\ind \in \{1,\ldots N\}$
 \For {$\ind = 1, \dots , N$} 
 \For {$l \in {\mathcal B}_m}$ 
\State Compute norms ${\bf n} \in \R^{|{\mathcal B}_\ind|}$, with ${\bf n} = \left(\norm{\Covl^{-\oh} \bPhi^{(l)}_k}\right)_{l \in {\mathcal B}_m}$
\EndFor
\State Sort norms ${\bf \tilde{n}} = \mbox{sort}({\bf n})$ with $\tilde{n}_1 > \dots > \tilde{n}_{|{\mathcal B}{\ind}|}$
\State Compute $l^\ast := \max \left\{l\in \{1,\dots ,|{\mathcal B}_{\ind}|\}: 
\tilde{n}_l - \frac{2\frac{\alpha}{\lip d_\ind}}{1+2 l \frac{\alpha}{\lip d_\ind}} \sum_{j=1}^{l} \tilde{n}_j >0\right\}$
\State $\gamma_\ind =  \frac{2\frac{\alpha}{\lip d_\ind}}{1+2l^\ast \frac{\alpha}{\lip d_\ind}} \sum_{j=1}^{l^\ast} \tilde{n}_j$
\EndFor

\noindent
{\bf Output:} $(\gamma_1,\ldots, \gamma_N)$
\end{algorithmic}
\label{alg_proxmapping}
\end{algorithm}

We  apply the ISTA iteration to $\tilde{\Gfun},$ yielding a sequence $\bPsi_{k}$. 
Upon a substitution to the original variables  
$\Phi_{k}^{(l)} =  \Covl^{1/2} \Psi_{k}^{(l)},$
we end up with the following Algorithm~\ref{alg:ista}.
Here we also include the possibility to have several inner gradient iterations with respect to 
$f,$ and  the stepsize~$\frac{1}{\lip}$ is fixed such that 
\begin{equation}
\label{eq:lip_const}
\lip > \lambda_{\max}(\bA^\ast  \cetap^{-1}  \bA). 
\end{equation}

\begin{algorithm}[H]
 \caption{ISTA}\label{alg:ista}
 \begin{algorithmic}
 \State {\bf Input:} $\lip >0$, $\alpha, \bd>0$,  $\bCovl^{-1},$ $\cetap^{-1}$,
 initial guess $\bPhi_{\rm init}$
  \State Set $\vcn = \vcn_{\rm init}$, $\bPhi_{1,0} = \bPhi_{\rm init}$ 
\For{$i=1, \dots, $ until convergence }
\For{$k=1,\dots, $ INNER} \Comment{perform one or several steps of a gradient method}
\State $\bPhi_{i,k} = \bPhi_{i,k-1} + \frac{1}{\lip}\cdot \bCovl \left( \bA^\ast \cetap^{-1}(\bphi - \bA \bPhi_{i,k-1})\right)$ 
 \EndFor

 \State Calculate threshold $\gamma_\ind$ for $\ind =1,\ldots, N$
 via Algorithm~\ref{alg_proxmapping} with $\bPhi_{i,k}$
 \State  $\Phi_{i,k}^{(l)} = \max\left(\norm{\Covl^{-1/2}\Phi_{i,k}^{(l)}} - 
 \gamma_\ind, 0\right) \cdot\frac{\Phi_{i,k}^{(l)}}{\norm{\Covl^{-1/2}\Phi_{i,k}^{(l)}}}, \quad \forall l \in {\mathcal B}_\ind, 
 \ind =1,\ldots, N$
 \Comment{shrinkage}
 \State Set $\bPhi_{i+1,0} = \bPhi_{i,k}$
 
\EndFor
\State {\bf Output:} $\bPhi_{i,k}$
\end{algorithmic}
\label{alg_ista1}
\end{algorithm}

The original version of ISTA has $\text{INNER}=1.$ Note that this algorithm has no 
problem if $\norm{C_{\bPhi^{(l)}}^{-1/2}\bPhi_{i,k}^{(l)}} = 0$ as the threshold is then 
simply set to $0$ in the shrinkage step. As before, the turbulence strength weights
$\vcn$ can be found in any outer iteration step by formula \eqref{eq:vcnsol}.

In order to accelerate convergence of ISTA, a
\textit{fast iterative shrinkage-thresholding algorithm} (FISTA) has been 
introduced in \cite{BeTe09}. The method is very similar to ISTA except that
it involves a two-step procedure (or alternatively two iteration variables). 
Only the iteration for $\bPhi$ is altered while the shrinkage step stays the same. 
The algorithm exhibits a similar computational cost per iteration than ISTA, 
but is faster in general. 
There is no problem in applying the FISTA acceleration concept to Algorithm~\ref{alg:ista};
as this is quite standard, we omit its description.

\section{Numerical results}
\label{sec:numerics}

\subsection{Simulation setup}

Our simulation setting consists of a 10m telescope with a circular aperture. We assume a 
guide star asterism with 6 natural guide stars arranged in a circle with a radius of 1.5 arcmin; see Figure \ref{fig:tel_geo}. 
In our simulations, we assume 6 corresponding Shack-Hartmann wavefront sensors. 
As discussed above, we consider only the tomography step, i.e., take the incoming wavefronts (calculated by an arbitrary wavefront reconstructor) as given.  
The correction is made by means of one deformable mirror. 

\begin{figure}[H]
\centering
\includegraphics[scale = 0.75]{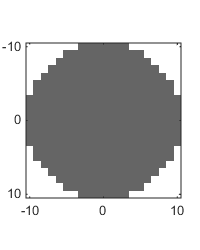} \hspace*{1cm}
\includegraphics[scale = 0.75]{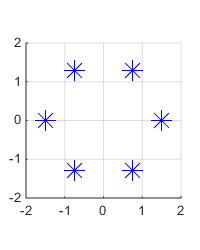}
\caption{Wavefront data discretization ($21\times 21$) on telescope aperture in grey (left). Guide star asterism with 6 natural guide stars in a circle of 1.5 arcmin radius (right).}
\label{fig:tel_geo}
\end{figure}

This mimics a Laser Tomography AO or a Multi-Object AO system and is well suited for evaluating the purely tomographic performance of the algorithms. The projection of the reconstructed layers onto the telescope aperture gives the optimal shape of the deformable mirror, thus, no sophisticated fitting step as, e.g., in Multi-Conjugate AO \cite{Fusco,RaRo12,HeYu13,RaRaYu15}, is needed between tomographic reconstruction and quality evaluation. 

We consider two prominent atmospheric models: the 9-layer profile and the 40-layer profile, provided by the ESO. For comparison of the two profiles, the relative (logarithmic) densities in each Voronoi interval \cite{Auzi14}, i.e. the absolute weights $\rho_l$ divided by the interval length, are depicted in Figure \ref{fig:atms}. Additionally, the corresponding layer heights $h_l$ are marked, $l=1,\dots ,L$ with $L=9$ and $L=40$ respectively.

\begin{figure}[h!]
\centering
\includegraphics[scale = 0.75]{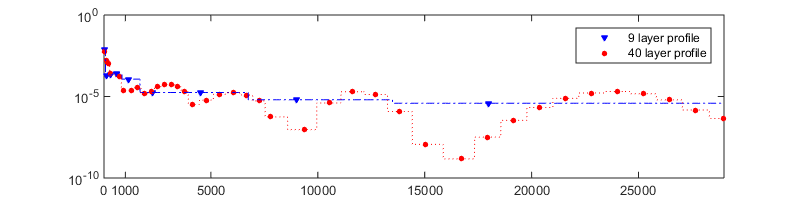}

\caption{Height (in meter) vs. relative (log) density of atmospheric profiles: 9-layer (blue, triangle), 40-layer (red, circle).}
\label{fig:atms}
\end{figure}

We simulate a layered atmosphere following the von Karman model \eqref{eq:cphi}. 
By projection onto the telescope aperture in the center direction the simulated screen is obtained. The incoming wavefronts 
in guide star directions serve as input data for our reconstruction algorithms. 
After the reconstruction, the atmospheric layers are projected onto the telescope aperture in 
the center direction in order to get the shape of the deformable mirror. 

 Image quality is evaluated over the whole 3 arcmin field of view as well as in the center direction 
 by means of the short-exposure Strehl ratio $S_\varphi$ \cite{LeVeKo05}, a value between 0 (worst) and 1 (best), 
 which we approximate via the Marechal criterion (e.g. \cite{Ro99})
 \begin{equation}
 \label{eq:marechal}
 S_\varphi \approx \exp\left(-\frac{1}{|\Om_D|}\norm{\overline{\varphi} - \varphi}^2_{L^2(\Om_D)}\right)\,,
 \end{equation}
 with $\dis\overline{\varphi} =\frac{1}{|\Om_D|}\int_{\Om_D} \varphi(r)\, dr\, $ the average phase over the aperture $\Om_D$. Furthermore, we calculate the relative error in directions $\ag$
 \begin{equation}
 \label{eq:rel_error}
 \eps_{\ag} = \frac{\norm{\Ao{g} \bPhi^{\rm orig}}_2 - \norm{\Ao{g} \bPhi^{\rm rec}}_2}{\norm{\Ao{g} \bPhi^{\rm orig}}_2}\,.
 \end{equation} 
 
 In the following, we only consider one time step, which spares us any concerns about
 temporal control and gain tuning. We refer to future work for a full simulation over more time steps.
 
\subsection{Alternating minimization results}

In Figure \ref{fig:altmin_vs_grad} we demonstrate the qualitative performance of the alternating minimization algorithm \ref{alg_altmin1}. 
We sample 512 realizations of the 9-layer atmospheric profile and compute the average radial Strehl ratio of our method and the standard gradient method \cite{SaRa15,RaSaYu14}. The alternating minimization algorithm outperforms the gradient method even if the latter is configured with exact layer weights.

\begin{figure}[h!]
\centering
\includegraphics[scale = 0.75]{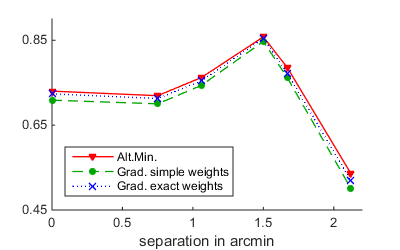}
\caption{Alternating Minimization Algorithm \ref{alg_altmin1} (red, triangle) vs. gradient method on fixed layer altitudes and exact weights (blue, cross) or simple weights $\vcn = (0.5, \frac{0.5}{8}, \dots , \frac{0.5}{8})$ (green, circle): Radially averaged strehl ratio over 512 realizations of the 9-layer profile vs. separation from center direction (in arcmin).}
\label{fig:altmin_vs_grad}
\end{figure}

The Strehl values are evaluated in 25 directions, i.e. in a $5\times 5$ square over the field of view. In Figure \ref{fig:altmin_vs_grad}, the radial average is displayed over the separation in arc minutes from the center direction. At the guide star radius, Strehl ratios are largest, whereas due to the big field of view, the quality in the center is significantly lower. This is a typical behaviour for wide field of view AO: comparable results but for bigger telescope apertures were obtained, e.g. in \cite{RoRa13}.
 
Tests in Figure \ref{fig:altmin_vs_grad} have been performed with 100 (outer) iterations and 10 INNER iterations, a regularization parameter $\alpha = 0.1$ and a ground-layer enforcing initial guess  $\vcn_{\rm init} = (0.5, \frac{0.5}{8}, \dots , \frac{0.5}{8})$. In the following, we stick to the choice of iteration number and starting value for the turbulence weights. 
A higher number of iterations improves quality by less than 1\% of Strehl and relative error and does not influence 
the reconstructed turbulence weights significantly. Similarly, the choice of initial weights only slightly influences 
the performance of the algorithm. With  uniform starting values, i.e. $\vcn_{\rm init} = (\frac{1}{9},\dots ,
\frac{1}{9})$, similar convergence behaviour but slightly lower quality was obtained.
The large weight on the ground layer is physically motivated as most of the turbulence appears there.

In Figure \ref{fig:altminGrad} we depict results for Algorithm \ref{alg_altmin_block} with three clusters, i.e. $N=3$ (bottom), and without clustering,
i.e. $N = 1$ (top). The left column shows the reconstructed turbulence weights on the nine layers, along with the weights used for simulation. In the right column, the Strehl ratio (in center direction and averaged over the field of view) and the relative error for varying regularization parameters $\alpha$ are depicted. Please note, that the prescribed interval strength $d_m$ per cluster is displayed as line at value $d_m$ over all layers in cluster ${\mathcal B}_m$ in all following figures. 
The number and sizes of the clusters ${\mathcal B}_m$ and the corresponding weights $d_m$ can vary. Here, three clusters are modelled
based on the idea of reducing the reconstruction effort to three instead of nine layers. Different choices of clusters 
and corresponding weights were tested yielding similar results.

\begin{figure}[h!]
\centering
\includegraphics[scale = 0.75]{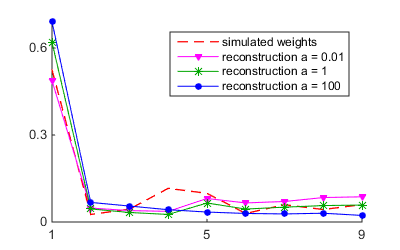}
\includegraphics[scale = 0.75]{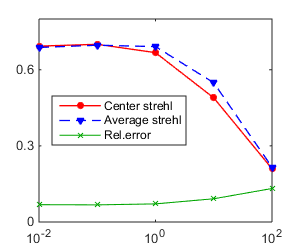}\\
\includegraphics[scale = 0.75]{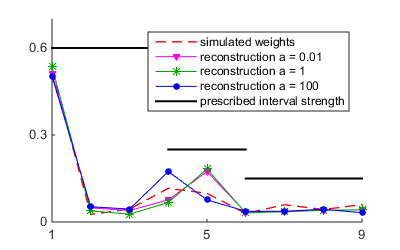}
\includegraphics[scale = 0.75]{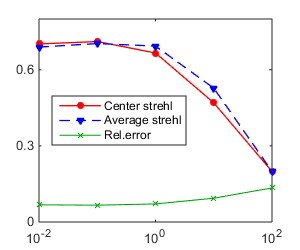}
\caption{Alternating Minimization Algorithm \ref{alg_altmin1}: Turbulence weights vs. layers (left), Strehl and error vs. regularization parameter (right). 1 cluster (top),  3 clusters (bottom). 9-layer atmosphere, 100 (outer) iterations, INNER = 10.}

\label{fig:altminGrad}
\end{figure}

We can observe that even for large regularization parameters $\alpha$ no sparse profile is obtained. This stems 
from the slow convergence  in case of  turbulence weights $\rho_l$ being close to zero.  The direct solution of the optimality condition for $\bPhi$ in equation \eqref{eq:opt_condition} would formally be identical to choosing INNER = $\infty$.
The result for solving the optimality condition 
by direct matrix inversion with ${\bP}^{-1}(\vcn) = \diag \left(\frac{1}{\rho_l + \epsilon}\right)$ and $\epsilon \sim 10^{-10}$ for stabilization, 
is depicted in Figure \ref{fig:altmin_direct}. In this case, we can observe that the sparsity of the solution is more pronounced. A similar result as in Figure~\ref{fig:altmin_direct} can be obtained by considering the alternating minimization procedure for \eqref{eq:global_problem} by multiplying $\bP(\vcn_i)$ to the update in the gradient step in 
Algorithm~\ref{alg_altmin1}, as discussed at the end of Section~\ref{subsec:AMA}. 

\begin{figure}[h!]
\centering
\includegraphics[scale = 0.75]{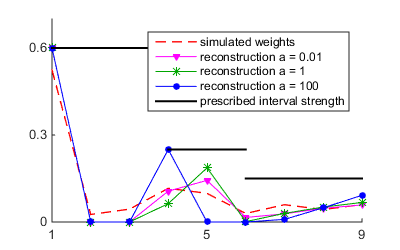}
\includegraphics[scale = 0.75]{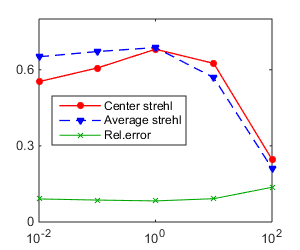}
\caption{Alternating Minimization Algorithm \ref{alg_altmin1} with direct solver for the optimality condition \eqref{eq:opt_condition}: Turbulence weights vs. layers (left), Strehl and error vs. regularization parameter (right). 9-layer atmosphere, 100 iterations, 
3 clusters.}
\label{fig:altmin_direct}
\end{figure}


Therefore, we can conclude that the alternating minimization with the clustering approach approximates the true turbulence profile and with increasing regularization parameter also enforces sparsity. 
 Moreover, the algorithm is easily integrated into existing reconstruction methods.

\subsection{ISTA results}
Compared to the alternative minimization approach, we can obtain a sparse turbulence profile with fewer iterations by utilizing the  Iterative Shrinkage-Thresholding Algorithm (ISTA). Without the clustering, ISTA tends to recover all turbulence strength on the ground layer.
As noted above, this is physically realistic. However, for wide field tomography problems higher vertical resolution is required. 
This can be guaranteed by introducing clusters.

\begin{figure}[h!]
\centering
\includegraphics[scale = 0.75]{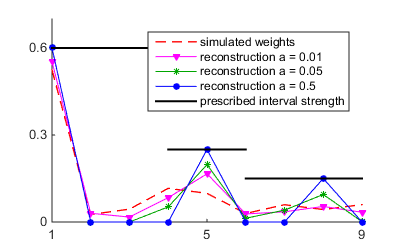}
\includegraphics[scale = 0.7]{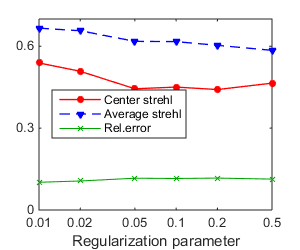}
\caption{ISTA: Turbulence weights vs. layers (left), Strehl and error vs. regularization parameter (right). 9-layer atmosphere, 10 (outer) iterations, INNER = 10, 
3 clusters.}
\label{fig:ista_alpha}
\end{figure} 
 
In Figure \ref{fig:ista_alpha} we demonstrate the results for ISTA with varying parameter $\alpha$. Recall that $\alpha$ influences the threshold for the shrinkage in Algorithm \ref{alg_proxmapping}. 
The clustering approach approximates the true turbulence profile for small parameters $\alpha$ and with increasing $\alpha$ enforces sparsity, where only one layer per cluster has a nonzero turbulence weight. 


For ISTA, 10 (outer) iterations along with 10 INNER iterations are already sufficient to reach a sparse 
turbulence profile and good quality. Compared to the alternating minimization algorithm, the quality is 
lower, in particular in the center. However, a sparse profile can be reached and the number of iterations is considerably lower.


\begin{figure}[h!]
\centering
\includegraphics[scale = 0.7]{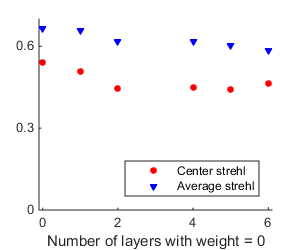}
\caption{ISTA as in Figure \ref{fig:ista_alpha}: Strehl value vs. number of layers with zero weights. }
\label{fig:ista_alpha_lay0}
\end{figure} 

In Figure \ref{fig:ista_alpha_lay0} we can observe that the best 
reconstruction quality is obtained when all weights are non-zero. Naturally, the quality 
decreases when some weights are set to zero. However, reducing the number of non-zero turbulence weights, i.e.,
the number of reconstruction layers, e.g., from seven to three, does not alter the quality significantly.  Thus, ISTA seems to be a promising approach to answering the question which layer altitudes are the most relevant for the reconstruction. 

For the sake of convergence speed, the covariance matrix $\bCovl$ of the turbulence statistics 
can be omitted in the gradient step. 
With a proper choice of $\lip$ and the number of ISTA iterations, both variants --- with and without $\bCovl$ --- 
yield very similar results and show comparable convergence behaviour.

The stepsize has to fulfill $\frac{1}{\lambda} \geq \frac{1}{\norm{\bA^\ast \bA}}
= \frac{1}{\lip_{max}(\bA^\ast \bA)} \approx 0.022 $. In the examples presented here, 
we chose a fix stepsize of 0.25, i.e. $\lambda = 4$, which leads to  fast convergence. 
 A more sophisticated stepsize choice, such as the classical steepest descent stepsize 
  can improve quality, in particular, the center Strehl ratio by several percent and the average Strehl ratio by a few percent. 

\begin{figure}[h!]
\centering
\includegraphics[width = 0.47\textwidth]{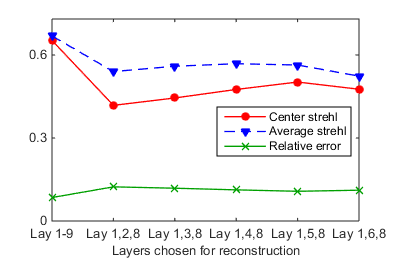}

\caption{ISTA chooses the best reconstruction layer altitudes: Strehl and error vs. reconstruction layers. Gradient method on fixed layer altitudes and heights, 100 iterations, reconstruction on all 9 layers (leftmost) and on 3 layers 
(cluster 1 and 3 fixed, i.e. layers 1 and 8, and varying layer in cluster 2).} 
\label{fig:bestof}
\end{figure}

In Figure \ref{fig:bestof} we demonstrate that ISTA recovers the qualitatively optimal reconstruction layer altitude per cluster. 
We run ISTA as in Figure \ref{fig:ista_alpha} with $\alpha = 0.5$, and determine the 
preselected candidates for non-zero reconstruction layers, i.e., layers 1, 5 and 8. 
Then we apply a standard gradient method \cite{SaRa15,RaSaYu14} with a fixed number of layers  and altitudes
and heights set to different combinations.  We fix layer 1 and 8 and vary the reconstruction layers 
in cluster 2 (layers 2 to 6). One can  observe, that the reconstruction layer 5 --- chosen as optimal both by ISTA and the 
alternating minimization for 
cluster 2 --- yields the best results compared to a reconstruction with  layers 2, 3, 4, and 6 instead.
This clearly demonstrates the potential of our algorithms to automatically select the ``best'' model (layers). 

Furthermore, Figure \ref{fig:ista_blockvar} shows that the choice of reconstruction layer 5 by ISTA and  the 
alternating minimization for 
cluster 2 is consistent with the variation of cluster sizes.

\begin{figure}[h!]
\centering
\includegraphics[scale = 0.75]{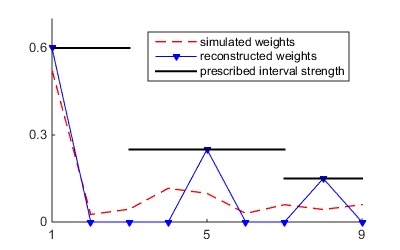}
\includegraphics[scale = 0.75]{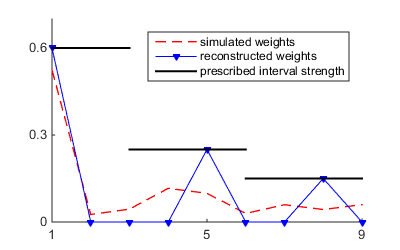}
\caption{Block variation for ISTA (settings as in Figure \ref{fig:ista_alpha}): Turbulence weights vs. layers.}
\label{fig:ista_blockvar}
\end{figure}

Similar results can be obtained for the 40-layer atmosphere. In Figure \ref{fig:ista_40lay}, we can observe, that 
already 50 (outer) iterations suffice to guarantee convergence to one layer 
per cluster. For $\alpha$ large enough, i.e.,  greater than 0.05, the convergence to the non-zero 
layers 1, 2, 12, 24 and 37, is robust with respect to varying 
cluster sizes and weights. As for the 9-layer atmosphere, the quality is very stable over the number of iterations.

\begin{figure}[h!]
\centering
\includegraphics[scale = 0.75]{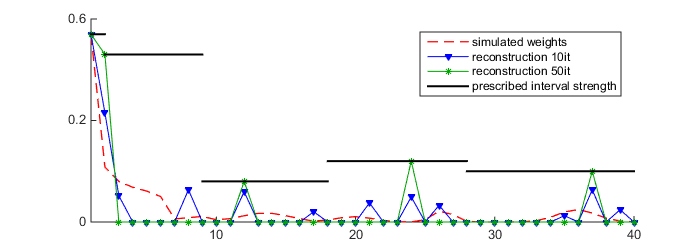}
\includegraphics[scale = 0.75]{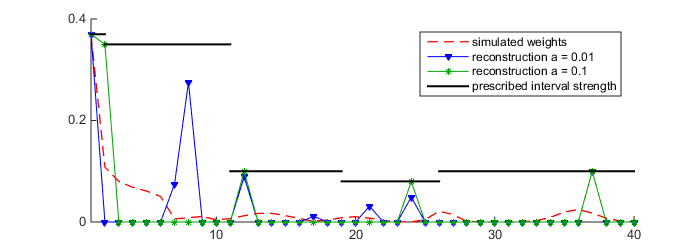}

\includegraphics[scale = 0.75]{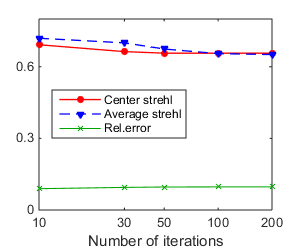}
\includegraphics[scale = 0.75]{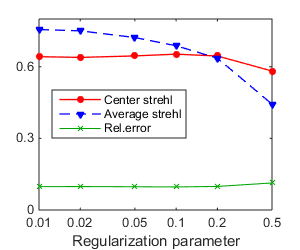}

\caption{ISTA on 40-layer atmosphere: Turbulence weights vs. layers (top), Strehl and error vs. number of (outer) iterations (bottom, left), vs. regularization parameter (bottom, right). INNER = 10, 5 clusters.}
\label{fig:ista_40lay}
\end{figure}

Using the FISTA-variant of Algorithm \ref{alg_ista1},
a speed up in convergence can be obtained. However, in these examples, 
the reduction in the computational effort is rather small, as only few iterations are needed anyway. 
However, this speed up could be a crucial point for systems with finer resolution such as the ELT.



The computational cost of one gradient step is determined by 
the operators $\bA, \bA^\ast$ and the application of $\bCovl^{-1}$. The numerical effort of 
$\bA$ and $\bA^\ast$ mainly stems from bilinear interpolation. The application of the inverse covariance matrix 
$\bCovl^{-1}$ is the most costly step in both Algorithms~\ref{alg_altmin_block} and \ref{alg:ista}.  
Note, that in Algorithm~\ref{alg_altmin_block}, this has to be performed in each INNER step, i.e., 
$k\cdot i$ times, while in ISTA, it is only needed in the shrinkage step, i.e., $i$ times. 
By thresholding, the dense matrices $C_{\Phi}^{-1}$ can be sparsified without loss of qualitative performance. 
However, more sophisticated methods, as, e.g., in \cite{YuHeRa13,YuHeRa13b}, are needed to efficiently apply 
the method for larger systems with finer resolution. 
Both algorithms can be parallelized on the reconstruction layers, as well as in the  guide star directions.

\begin{table}[h!]
\centering
\begin{tabular}{|l||c|c||c|c|}
\cline{2-5}
\multicolumn{1}{c|}{}& $L$ & cost per it & $L$ & cost per it \\
\hline
\hline
original atmosphere & 40 & $3908n$  &  9 & $839n$\\
\hline
downsampled atmosphere & 5 & $443n$ & 3 & $245n$ \\
\hline
speed up factor & \multicolumn{2}{|c||}{8.8} &  \multicolumn{2}{|c|}{3.4} \\
\hline
\end{tabular}
\caption{Upper bounds of the computational cost estimates per standard gradient iteration, one iteration costs $(16G+3)nL + (2-9G)n$, $G=6$ guide stars and $L$ the number of reconstruction layers.}
\label{tab_comp}
\end{table}

Table~\ref{tab_comp} shows the computational cost estimates and speed 
up factors for a standard gradient iteration  \cite{SaRa15} (with fixed turbulence heights and weights). 
With the resulting downsampled turbulence profiles from the alternating minimization algorithm or ISTA, 
we could reach speed up factors of 8.8 and 3.4 for the 40-layer and 9-layer atmosphere, respectively. 
Again, this should serve as a justification of the proposed methods as a tool for joint identification and 
model reduction.

\section{Conclusion}
In this paper, we presented a new  reconstruction algorithm for  atmospheric tomography  
that includes  an automatic model optimization of the reconstruction profile. 
We derived the corresponding Bayesian model
and presented two different algorithms to solve the joint optimization with respect to the atmospheric 
layers and their turbulence weights. Our numerical results suggest that both algorithms choose optimal 
reconstruction heights which leads to a significant speed up while still results of good quality  are obtained.
Moreover, our algorithms can be easily adapted for a two-step method, 
such as \cite{HeYu13}, with wavefront sensor measurements as input data instead of incoming wavefronts.

We believe that our method is very promising especially for wide field of view AO systems 
as the approach unites the idea of conventional compression algorithms for a 
layered atmosphere and the tomographic reconstruction itself.  
The numerical experiments presented here indicate that the methods are not yet 
feasible for real-time usage in present-day AO systems with many degrees of freedom due to 
some rather time-consuming steps like the application of the turbulence statistics. 
However, our model reduction approach  can still be used offline for profile optimization, running in 
parallel to a standard  real-time reconstructor. The updated and optimized profile 
can then be included into the reconstructor  whenever available. 
%
This allows to adapt on the fly to changing atmospheric conditions.
\def\cprime{$'$}

\end{document}